\newtheorem{theorem}{Theorem}[section]
\newtheorem{lemma}[theorem]{Lemma}
\newtheorem{proposition}[theorem]{Proposition}
\newtheorem{corollary}[theorem]{Corollary}
\theoremstyle{definition}
\newtheorem{definition}[theorem]{Definition}
\newtheorem{assumption}[theorem]{Assumption}
\theoremstyle{remark}
\newtheorem{remark}[theorem]{Remark}
\newtheorem{examples}[theorem]{Examples}
\newcommand{\N}{\mathbb{N}}
\newcommand{\Z}{\mathbb{Z}}
\newcommand{\R}{\mathbb{R}}
\newcommand{\C}{\mathbb{C}}
\DeclareMathOperator\supp{supp}
\newcommand{\hatstar}{\mathbin{\hat{\star}}}
\newcommand{\checkstar}{\mathbin{\check{\star}}}
\newcommand{\supportGraphic}{\includegraphics{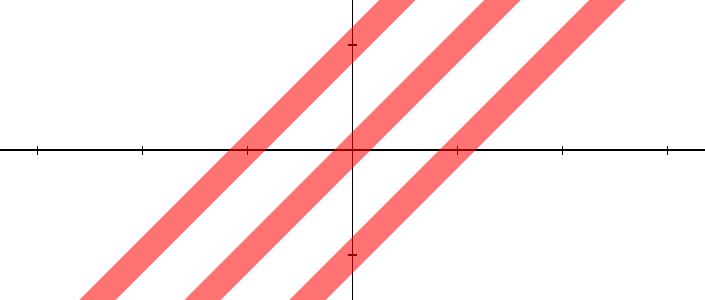}}
\begin{document}

\title{
  Group Cross-Correlations with Faintly Constrained Filters
}

\author{Benedikt Fluhr}

\maketitle


\begin{abstract}
  Group convolutional layers
  with respect to some group $G$
  are modeled by convolutions
  or cross-correlations with a filter,
  and they provide the fundamental building block
  for group convolutional neural networks.
  For entirely unconstrained filters
  and $G$ a non-abelian group,
  any hidden layer of such a network
  requires as many nodes as vertices
  in a fine enough discretization of $G$.
  In order to reduce the necessary number of nodes,
  certain constraints on filters
  were proposed in the literature.
  We propose weaker constraints
  retaining this benefit
  while also resolving an incompatibility
  previous constraints have for group actions with non-compact stabilizers.
  Moreover,
  we generalize
  previous results to group actions
  that are not necessarily transitive,
  and we weaken the common assumption that $G$ is unimodular.
\end{abstract}

\section{Introduction}
\label{sec:intro}

Group convolutional neural networks
with respect to some group $G$
were introduced by \citet{pmlr-v48-cohenc16}.
The distinguishing feature of such a neural network
is that some of its layers
are modeled by cross-correlations
of functions
with some \emph{filter} henceforth denoted by $\omega$ (or $\omega'$).
Initially,
\citet{pmlr-v48-cohenc16}
impose no constraints on the filter $\omega$
(there denoted by $\psi$)
and as a result,
within any hidden layer feature vectors
are modeled by functions defined on the entire group $G$
rather than a quotient of $G$
when $G$ is non-abelian.
\citet[Definition 5]{pmlr-v80-kondor18a} resolved this limitation
by imposing a constraint on $\omega$
(there denoted by $\chi$),
which can be paraphrased as
\emph{bi-invariance}.
\citet{10.5555/3454287.3455107}
generalized cross-correlations
from transformations of vector-valued functions
to transformations of sections
to $G$-equivariant vector bundles.
In order to accommodate the $G$-action on total spaces,
the bi-invariance constraint
of \mbox{\citep[Definition 5]{pmlr-v80-kondor18a}}
is generalized to their constraint
of \emph{bi-equivariance}.
As we will see in \cref{sec:comparison},
this constraint of bi-equivariance (or bi-invariance) is too strict,
when stabilizers are not compact.
In the present paper we propose a weaker constraint on $\omega$
with \eqref{eq:omegaConstraint}
that also works for non-compact stabilizers.
Thinking of bi-equivariance as having both left-equivariance
and right-equivariance,
our constraint can be paraphrased as
\enquote{equivariance with respect to conjugation},
which is implied by bi-equivariance.



Other than the compactness of stabilizers,
a common assumption is that the considered $G$-action
should be transitive.
In our \cref{definition:xcorr} of a cross-correlation,
transitivity is not imposed.
We also weaken the assumption
that $G$ is unimodular.
Otherwise,
we adopt the setting and the generality
by \citet{10.5555/3454287.3455107}.
In order to accommodate non-transitive $G$-actions
when comparing cross-correlations to integral transforms,
we introduce \emph{orbitwise integral transforms}
in \cref{sec:orbitwiseIntegralTrafos}
and we recover the widely known correspondence
between $G$-equivariant integral transforms
and $G$-equivariant kernels
henceforth denoted by $\kappa$.
Finally,
we establish a close relation
between $G$-equivariant orbitwise integral transforms
and cross-correlations
in \cref{sec:integralTrafosAsXCorr}.

\section{Group Cross-Correlations}
\label{sec:xcorr}

We start with a concrete example which will then be generalized
so as to obtain a rather broad notion of a \emph{cross-correlation}
with \cref{definition:xcorr}.
As a group 
we consider the real numbers $\R$ with addition
and as an $\R$-space we consider 
the unit circle ${S^1 \subset \C}$
endowed with the $\R$-action
\begin{equation}
  \label{eq:actionOnCircle}
  \_.\_ \colon
  \R \times S^1 \rightarrow S^1, \,
  (t, p) \mapsto t.p \coloneqq e^{ti} p.
\end{equation}
Now suppose we meant to design a neural network layer
having continuous functions ${S^1 \rightarrow \R}$
as inputs and as outputs.
As inputs such functions
could for example describe temperatures measured in each point
or velocities in counter-clockwise direction
of a fluid constrained to the circle.
Moreover,
suppose that the receptive field of each point
should be limited to a small neighborhood.
Such a transformation of a function
${f \colon S^1 \rightarrow \R}$
can be obtained as a \emph{cross-correlation}
with a \emph{filter}
${\omega' \colon \R \rightarrow \R}$
supported on a small neighborhood of ${0 \in \R}$:
\begin{equation}
  \label{eq:xcorrHat}
  \omega' \hatstar f \colon
  S^1 \rightarrow \R,\,
  p \mapsto
  \int_{-\infty}^{\infty} \omega'(t) f(t.p) dt.
\end{equation}
Here we added the hat to the $\star$-operator
as we will use the unaltered $\star$-operator
to denote the general form of cross-correlations
provided by \cref{definition:xcorr}.

The action \eqref{eq:actionOnCircle}
also induces an $\R$-action on functions
\begin{equation*}
  \_.\_ \colon \R \times \R^{S^1} \rightarrow \R^{S^1}, \,
  (t, f) \mapsto t.f
\end{equation*}
where $t.f$ is the rotation of $f$ (as its graph)
by $t$ in counter-clockwise direction,
i.e. we have
\begin{equation}
  \label{eq:actionOnFunctionsOnCircle}
  (t.f)(p) \coloneqq f\big(e^{-ti}p\big) = f((-t).p)
\end{equation}
for all ${t \in \R}$,
${f \colon S^1 \rightarrow \R}$,
and ${p \in S^1}$.
Note that in the present paper
function application takes precedence over the binary
operator
\enquote{$\_.\_$} denoting group actions.
Moreover,
the operation
\begin{equation*}
  \omega' \hatstar - \,\colon 
  f \mapsto \omega' \hatstar f
\end{equation*}
is $\R$-equivariant,
i.e. for any ${t \in \R}$,
continuous ${f \colon S^1 \rightarrow \R}$,
and ${p \in S^1}$ we have the equation
\begin{equation}
  \label{eq:equivarianceCircle}
  \begin{split}
    \big(\omega' \hatstar t.f\big)(p)
    &\stackrel{\eqref{eq:xcorrHat}}{=}
      \int_{-\infty}^{\infty}
      \omega'(s) (t.f)(s.p) ds
    \\
    &\stackrel{\eqref{eq:actionOnFunctionsOnCircle}}{=}
      \int_{-\infty}^{\infty}
      \omega'(s) f((-t).s.p) ds
    \\
    &\,=
      \int_{-\infty}^{\infty}
      \omega'(s) f((s-t).p) ds
    \\
    &\,=
      \int_{-\infty}^{\infty}
      \omega'(s) f(s.(-t).p) ds
    \\
    &\stackrel{\eqref{eq:xcorrHat}}{=}
      \big(\omega' \hatstar f\big)((-t).p)
    \\
    &\stackrel{\eqref{eq:actionOnFunctionsOnCircle}}{=}
      \big(t.\big(\omega' \hatstar f\big)\big)(p)
  \end{split}
\end{equation}
or equivalently
\begin{equation}
  \label{eq:equivarianceCircleMod}
  \begin{split}
    \big(\omega' \hatstar t.f\big)(t.p)
    &\stackrel{\eqref{eq:equivarianceCircle}}{=}
      \big(t.\big(\omega' \hatstar f\big)\big)(t.p)
    \\
    &\stackrel{\eqref{eq:actionOnFunctionsOnCircle}}{=}
      \big(\omega' \hatstar f\big)((-t).t.p)
    \\
    &\,=
      \big(\omega' \hatstar f\big)((t-t).p)
    \\
    &\,=
      \big(\omega' \hatstar f\big)(p).
  \end{split}
\end{equation}

\subsection{Generalization to Non-Abelian Groups}
\label{sec:toNonAbelian}

Now let $G$ be a Hausdorff topological group
with $e$ its neutral element
and let $B$ be a Hausdorff $G$-space throughout,
i.e. we have a continuous $G$-action
\begin{equation*}
  \_.\_ \colon
  G \times B \rightarrow B, \,
  (g, b) \mapsto g.b.
\end{equation*}
(We use the letter \enquote{$B$} as it will also serve as a base space
for vector bundles from \cref{sec:mackey} onward.)
We now consider the not necessarily abelian group $G$
acting on $B$
in place of the additive group $\R$
acting on $S^1$
in order to see where we get stuck,
when we try to obtain a counterpart
to equation \eqref{eq:equivarianceCircleMod}
or equivalently $G$-equivariance.

In analogy to the above example
we assume we have
a continuous compactly supported function
${\omega' \colon G \rightarrow \R}$
and we also adapt the
$\hatstar$-operator from \eqref{eq:xcorrHat} by setting
\begin{equation}
  \label{eq:xcorrHatGeneralized}
  \big(\omega' \hatstar f\big)(b) \coloneqq
  \int_G \omega'(h)f(h.b)d\mu'(h)
\end{equation}
for all continuous ${f \colon B \rightarrow \R}$
and ${b \in B}$,
where
${\mu' \colon \mathcal{B}(B) \rightarrow [0, \infty]}$
is some locally finite Borel measure on $B$.
We now examine where the operation
\begin{equation*}
  \omega' \hatstar - \,\colon 
  f \mapsto \omega' \hatstar f
\end{equation*}
fails to be $G$-equivariant.
The $G$-action on functions is defined completely analogously as
\begin{equation*}
  \_.\_ \colon G \times \R^B \rightarrow \R^B, \,
  (g, f) \mapsto g.f
\end{equation*}
where
\begin{equation*}
  g.f \colon B \rightarrow \R,\,
  b \mapsto (g.f)(b) \coloneqq f\big(g^{-1}.b\big).
\end{equation*}

Now
let ${g \in G}$, ${f \colon B \rightarrow \R}$,
and ${b \in B}$.
Then a counterpart to the left-hand side of \eqref{eq:equivarianceCircleMod}
simplifies as
\begin{equation}
  \label{eq:firstAttemptXCorrLeft}
  \begin{split}
    \big(\omega' \hatstar g.f\big)(g.b)
    &=
      \int_G \omega'(h)(g.f)(hg.b)d\mu'(h)
    \\
    &=
      \int_G \omega'(h)f\big(g^{-1}hg.b\big)d\mu'(h).    
  \end{split}
\end{equation}
If $G$ was abelian
(or if $g$ was in the center of $G$),
then we could use ${g^{-1}hg = h}$
to continue with a simplification similar to \eqref{eq:equivarianceCircle}
ending in \eqref{eq:xcorrHatGeneralized}.
In order to get rid of the conjugation by $g^{-1}$
in the last term of \eqref{eq:firstAttemptXCorrLeft},
we make both
the measure for integration $\mu'$
and the filter $\omega'$ dependent on the argument
provided to cross-correlations,
here $b$ in \eqref{eq:xcorrHatGeneralized}
and $g.b$ in \eqref{eq:firstAttemptXCorrLeft}.

\sloppy
So for the remainder of this paper excluding appendices
we assume we have a family
\begin{equation*}
  \{\mu_b \colon \mathcal{B}(G) \rightarrow [0, \infty]\}_{b \in B}
\end{equation*}
of locally finite Borel measures
that is compatible with the group action ${G \curvearrowright B}$
in the sense that
\begin{equation}
  \label{eq:muInv}
  \mu_{g.b} = c_{g*} \mu_b
\end{equation}
for any ${g \in G}$ and ${b \in B}$,
where
${c_{g*} \mu_b}$ is the pushforward measure
of $\mu_b$ along the conjugation
${c_g \colon G \rightarrow G,\, h \mapsto ghg^{-1}}$.
For now, we also assume we have a continuous function
\begin{equation*}
  \omega \colon G \times B \rightarrow \R
\end{equation*}
so as to provide a filter
${\omega(-, b) \colon G \rightarrow \R}$
for any ${b \in B}$
subject to the constraint
\begin{equation}
  \label{eq:omegaConstraintSimple}
  \omega\big(ghg^{-1}, g.b\big) = \omega(h, b)
\end{equation}
for all ${g, h \in G}$ and ${b \in B}$.
Moreover,
in order to obtain well-defined cross-correlations,
we assume each of the partially applied (filter) functions
${\omega(-, b) \colon G \rightarrow \R}$
for ${b \in B}$
to be compactly supported.
Then for a continuous function
${f \colon B \rightarrow \R}$,
we define a cross-correlation by
\begin{equation}
  \label{eq:xcorrCheck}
  \omega \checkstar f \colon
  B \rightarrow \R, \,
  b \mapsto
  \int_G \omega(h, b) f(h.b) d\mu_b(h).
\end{equation}
Indeed,
as a counterpart to \eqref{eq:equivarianceCircleMod}
we obtain the equation
\begin{equation}
  \label{eq:equivarianceFunctionsMod}
  \begin{split}
    (\omega \checkstar g.f)(g.b)
    &\stackrel{\eqref{eq:xcorrCheck}}{=}
      \int_G \omega(h, g.b) (g.f)(hg.b) d\mu_{g.b}(h)
    \\
    &\stackrel{\eqref{eq:muInv}}{=}
      \int_G \omega(h, g.b) f\big(g^{-1}hg.b\big) dc_{g*}\mu_{b}(h)
    \\
    &\,=
      \int_G \omega\big(ghg^{-1}, g.b\big) f(h.b) d\mu_{b}(h)
    \\
    &\stackrel{\eqref{eq:omegaConstraintSimple}}{=}
      \int_G \omega(h, b) f(h.b) d\mu_{b}(h)
    \\
    &\stackrel{\eqref{eq:xcorrCheck}}{=}
      (\omega \checkstar f)(b)    
  \end{split}
\end{equation}
for all ${g \in G}$ and ${b \in B}$.
Here the third equality follows from the change of variables formula
for the pushforward measure $c_{g*}\mu_b$.

\subsubsection{Continuity of Cross-Correlations}

\sloppy
In order for the function defined in \eqref{eq:xcorrCheck}
to be continuous,
we also assume the family $\{\mu_b\}_{b \in B}$
to be \emph{continuous}
in the sense that
\begin{equation*}
  B \rightarrow \R, \,
  b \mapsto \int_G f'(h) d \mu_b(h)
\end{equation*}
is a continuous function
for any \emph{compactly supported} continuous
${f' \colon G \rightarrow \R}$.
Here the \emph{support} of $f'$ --
denoted by ${\supp f'}$ --
refers to the closure of the set
${\{g \in G \mid f'(g) \neq 0\}}$
and by saying that $f'$ is \emph{compactly supported}
we mean that its support is compact.
In the Appendix \ref{sec:famHaarMeas}
and specifically \cref{thm:existenceFamHaarMeas}
we show how such a family of measures
can be constructed in a natural way.

\subsubsection{Constraints as Stabilizer Invariances}
\label{sec:contraintsInvariances}

Even though the filter $\omega$ now depends on a point ${b \in B}$
as a second argument,
the constraint \eqref{eq:omegaConstraintSimple} entails
that the partially applied function
${\omega(-, g.b) \colon G \rightarrow \R}$
is fully determined by
${\omega(-, b) \colon G \rightarrow \R}$
for any ${g \in G}$ and ${b \in B}$.
In particular,
if $G$ acts transitively on $B$,
then providing a filter as ${\omega \colon G \times B \rightarrow \R}$
is the same as to provide just one of the partially applied functions
${\omega(-, b) \colon G \rightarrow \R}$ for some ${b \in B}$
satisfying the constraint
\begin{equation}
  \label{eq:omegaConstraintSimplePartiallyApplied}
  \omega\big(ghg^{-1}, b\big) = \omega(h, b)
\end{equation}
for all ${g \in G_b}$ and ${h \in G}$.
Here $G_b$ denotes the stabilizer of $G$ at $b$.

If the group $G$ is abelian,
then each partially applied function
${\omega(-, b) \colon G \rightarrow \R}$ for ${b \in B}$
is entirely unconstrained and we have
${\omega(-, b) = \omega(-, b')}$
for any two ${b, b' \in B}$
in the same orbit of the action ${G \curvearrowright B}$.
In this case,
it comes down to a design choice,
whether we use a filter as $\omega$ that is effectively parametrized
by the orbits of ${G \curvearrowright B}$
(which are points of the quotient space ${G \backslash B}$)
or if we reuse the same weights provided by a single unconstrained filter
${\omega' \colon G \rightarrow \R}$
across all orbits
with the cross-correlation specified by \eqref{eq:xcorrHatGeneralized}.

In particular,
if $G$ is abelian and it acts transitively on $B$,
then a cross-correlation as specified by \eqref{eq:xcorrCheck}
reduces to a cross-correlation as in \eqref{eq:xcorrHatGeneralized}
with a single unconstrained filter
${\omega' \colon G \rightarrow \R}$.
This is in stark contrast
with the previous approach to group convolutions,
where there is a non-trivial constraint on filters
as soon as there are non-trivial stabilizers,
see for example \mbox{\citep[Definition 5]{pmlr-v80-kondor18a}},
\mbox{\citep[Theorem 3.2]{10.5555/3454287.3455107}},
or \mbox{\citep[Section 3.2]{Gerken2023-dm}}.

In general, we may choose some fundamental domain ${D \subset B}$
so the filter $\omega$
is fully described by the family
\begin{equation*}
  \{\omega(-, b) \colon G \rightarrow \R\}_{b \in D}
\end{equation*}
of partially applied functions,
each satisfying the constraint
\eqref{eq:omegaConstraintSimplePartiallyApplied}
for all ${g \in G_b}$ and ${h \in G}$.

\sloppy
Similarly,
the measure $\mu_b$ for ${b \in B}$
is invariant under conjugation by any element
in the stabilizer $G_b$.
So if $G$ acts transitively on $B$,
then it suffices to provide a single locally finite Borel measure $\mu_b$
that is invariant under conjugation by $G_b$
for some choice of basepoint ${b \in B}$.
(In this case we set ${\mu_{g.b} \coloneqq c_{g*} \mu_b}$
for each ${g \in G}$,
which is not an overdetermined definition
by our $G_b$-invariance constraint on $\mu_b$.)

\sloppy
This will be a recurring theme throughout the present paper.
Oftentimes we will have some entity parametrized by $B$
in a $G$-equivariant way entailing a $G_b$-invariance
(or later $G_b$-equivariance)
constraint
for the single entity associated to any ${b \in B}$.
The benefit of working with such parametrized entities is
that it eliminates the need to check
the independence of constructions
from some choice of fundamental domain
(or basepoint for transitive actions),
which can be viewed as an implementation detail.

\subsubsection{Generalization to Vector-Valued Functions}

We also note that the cross-correlation defined in \eqref{eq:xcorrCheck}
readily generalizes to a transformation of vector-valued functions
via matrix-valued filters.
More specifically,
for ${m, n \in \N}$,
for a continuous vector-valued function ${f \colon B \rightarrow \R^n}$,
and for a matrix-valued compactly supported filter
${\omega \colon G \times B \rightarrow \R^{m \times n}}$
satisfying the constraint \eqref{eq:omegaConstraintSimple}
for all ${g, h \in G}$ and ${b \in B}$,
we obtain a cross-correlation
${\omega \checkstar f \colon B \rightarrow \R^m}$
in much the same way.
In particular,
the proof of $G$-equivariance
is provided by the same calculation \eqref{eq:equivarianceFunctionsMod}.
In the remainder of this \cref{sec:xcorr},
we generalize this form of a cross-correlation
to $G$-equivariant vector bundles.

\subsection{Mackey Sections}
\label{sec:mackey}

First of all,
recall from \cref{sec:toNonAbelian}
that $G$ is a Hausdorff topological group
and $B$ a Hausdorff $G$-space.
In order to reduce cross-correlations
transforming sections of vector bundles
to cross-correlations transforming vector-valued functions,
\citet[Section 2.3.1]{10.5555/3454287.3455107}
proposed the use of \emph{Mackey functions}.
In this \cref{sec:mackey} we generalize or adapt this notion
to the case of a not necessarily transitive $G$-action.

To this end,
let ${E \rightarrow B}$ be a $G$-equivariant real vector bundle over $B$.
Roughly speaking,
this amounts to $E$ being a $G$-space,
the bundle projection ${E \rightarrow B}$
being $G$-equivariant,
and each fiber $E_b$ above some ${b \in B}$
(with respect to the projection ${E \rightarrow B}$)
carries the structure of a real vector space.
A continuous map
${f \colon B \rightarrow E}$
with ${f(b) \in E_b}$ for all ${b \in B}$
is henceforth referred to as a continuous \emph{section}
of the bundle ${E \rightarrow B}$,
and we denote the real vector space of such continuous sections
by ${\Gamma(E)}$.

Now let ${f \in \Gamma(E)}$ be a continuous section.
In order to transform $f$
by forming a cross-correlation,
it will be convenient to express the value ${f(h.b) \in E_{h.b}}$
for some ${h \in G}$ and ${b \in B}$
as a vector in $E_b$.
To this end,
we define the map
\begin{equation}
  \label{eq:definitionMackeySection}  
  \tilde{f} \colon
  G \times B \rightarrow E, \,
  (h, b) \mapsto h^{-1}.f(h.b)
\end{equation}
making the diagram
\begin{equation}
  \label{eq:mackeyLift}
  \begin{tikzcd}[ampersand replacement=\&,row sep=6ex, column sep=9ex]
    \&
    E
    \arrow[d]
    \\
    G \times B
    \arrow[r]
    \arrow[ru, "\tilde{f}"]
    \&
    B
    \\[-6ex]
    (h, b)
    \arrow[r, mapsto]
    \&
    b
  \end{tikzcd}
\end{equation}
commute.
Moreover,
$\tilde{f}$ satisfies the two equations
\begin{align}
  \label{eq:periodicity}
  \tilde{f}(h, g.b)
  &=
    g.\tilde{f}(hg, b)
    \quad \text{and}
  \\
  \label{eq:mackeyToSection}
  \tilde{f}(e, b)
  &=
    f(b)
\end{align}
for all ${g, h \in G}$ and ${b \in B}$.

\begin{remark}
  \label{remark:mackeyDetermined}
  The map ${\tilde{f} \colon G \times B \rightarrow E}$
  is completely determined by the equations
  \eqref{eq:periodicity} and \eqref{eq:mackeyToSection}.
  In order to see this,
  let ${h \in G}$ and ${b \in B}$.
  Then we reobtain the defining formula for $\tilde{f}(h, b)$
  provided in
  \eqref{eq:definitionMackeySection}
  only using the equations
  \eqref{eq:periodicity} and \eqref{eq:mackeyToSection}:
  \begin{align*}
    \tilde{f}(h, b)
    &=
      h^{-1}.h.\tilde{f}(eh, b)
    \\
    &\!\!\stackrel{\eqref{eq:periodicity}}{=}
      h^{-1}.\tilde{f}(e, h.b)
    \\
    &\!\!\stackrel{\eqref{eq:mackeyToSection}}{=}
      h^{-1}.f(h.b).
  \end{align*}
\end{remark}

\begin{definition}[Mackey Sections]
  \label{definition:mackeySection}
  We name a lift $\tilde{f}$ as in diagram \eqref{eq:mackeyLift}
  satisfying equation \eqref{eq:periodicity}
  a \emph{Mackey section}
  and we say that $\tilde{f}$ is the
  \emph{Mackey section associated} to the section ${f \in \Gamma(E)}$.
  We denote the real vector space of Mackey sections of $E$
  by $M(E)$.
\end{definition}

If $G$ acts transitively on $B$,
then $\tilde{f}$ (and hence $f$)
is uniquely determined by the partially applied function
${\tilde{f}(-, b) \colon G \rightarrow E_b \cong \R^n}$
for any choice of basepoint ${b \in B}$.
Moreover,
the equation \eqref{eq:periodicity}
provides the $G_b$-equivariance constraint
\begin{equation*}
  \tilde{f}(h, b)
  =
  g.\tilde{f}(hg, b)
\end{equation*}
for any ${g \in G_b}$ and ${h \in G}$.
This is the type of function ${G \rightarrow E_b \cong \R^n}$
that \citet[Section 2.3.1]{10.5555/3454287.3455107}
refer to as a \emph{Mackey function},
i.e. a function ${G \rightarrow E_b \cong \R^n}$ is a Mackey function
(and hence determines a section of $E$)
iff it satisfies such $G_b$-equivariance constraint.
So the Mackey section ${\tilde{f} \in M(E)}$ can also be thought of
as the family
${\big\{\tilde{f}(-, b) \colon G \rightarrow E_b\big\}_{b \in B}}$
of Mackey functions
with respect to any basepoint in $B$
associated to the section ${f \in \Gamma(E)}$.

\subsubsection{Group Actions on (Mackey) Sections}

We now define compatible $G$-actions on sections and Mackey sections
of the vector bundle ${E \rightarrow B}$.
The associated $G$-action on ordinary sections
can be seen as a form of conjugation:
\begin{equation*}
  \_.\_ \colon
  G \times \Gamma(E) \rightarrow \Gamma(E), \,
  (g, f) \mapsto g.f,
\end{equation*}
where
\begin{equation}
  \label{eq:conjAction}
  (g.f)(b) \coloneqq g.f\big(g^{-1}.b\big)
\end{equation}
for all ${b \in B}$.
As already noted above,
in the present paper
function application takes precedence over the binary
operator
\enquote{$\_.\_$} denoting group actions.
The corresponding $G$-action on Mackey sections is more simple:
\begin{equation*}
  \_.\_ \colon
  G \times M(E) \rightarrow M(E), \,
  \big(g, \tilde{f}\big) \mapsto g.\tilde{f},
\end{equation*}
where
\begin{equation}
  \label{eq:mackeyAction}
  \big(g.\tilde{f}\big)(h, b) \coloneqq \tilde{f}\big(g^{-1}h, b\big)
\end{equation}
for all ${h \in G}$ and ${b \in B}$.

\begin{proposition}
  \label{thm:isoMackey}
  The map
  \begin{equation}
    \label{eq:sectionToMackeySection}
    \Gamma(E) \rightarrow M(E), \,
    f \mapsto \tilde{f}
  \end{equation}
  sending a section ${f \in \Gamma(E)}$
  to its associated Mackey section ${\tilde{f} \in M(E)}$
  is an $\R$-linear $G$-equivariant bijection with inverse
  \begin{equation}
    \label{eq:mackeySectionToSection}
    M(E) \rightarrow \Gamma(E), \,
    \tilde{f} \mapsto \tilde{f}(e, -).
  \end{equation}
\end{proposition}

\begin{proof}
  Let ${f \in \Gamma(E)}$
  and let ${\tilde{f} \in M(E)}$
  be the associated Mackey section.
  By the equation \eqref{eq:mackeyToSection},
  the partial evaluation at the neutral element
  \eqref{eq:mackeySectionToSection}
  is a left inverse to the map \eqref{eq:sectionToMackeySection}.
  As \eqref{eq:periodicity} is the defining equation
  of the \cref{definition:mackeySection}
  of Mackey sections
  and as $\tilde{f}$ is completely determined by equations
  \eqref{eq:periodicity} and \eqref{eq:mackeyToSection}
  considering \cref{remark:mackeyDetermined},
  the map \eqref{eq:mackeySectionToSection} is a right inverse as well.
  For the proof of $G$-equivariance
  let ${g, h \in G}$, and ${b \in B}$.
  Then we have
  \begin{align*}
    \big(\widetilde{g.f}\big)(h, b)
    &\stackrel{\eqref{eq:definitionMackeySection}}{=}
      h^{-1}.(g.f)(h.b)
    \\
    &\stackrel{\eqref{eq:conjAction}}{=}
      h^{-1}.g.f\big(g^{-1}.h.b\big)
    \\
    &~=
      \big(g^{-1}h\big)^{-1}.f\big(g^{-1}h.b\big)
    \\
    &\stackrel{\eqref{eq:definitionMackeySection}}{=}
      \tilde{f}\big(g^{-1}h, b\big)
    \\
    &\stackrel{\eqref{eq:mackeyAction}}{=}
      g.\tilde{f}(h, b).
      \qedhere
  \end{align*}
\end{proof}

\subsection{Cross-Correlations with a Filter}
\label{sec:xcorrFilter}

First of all,
recall from \cref{sec:toNonAbelian} that
\[\{\mu_b \colon \mathcal{B}(G) \rightarrow [0, \infty]\}_{b \in B}\]
is a continuous family of locally finite Borel measures
such that
\begin{equation}
  \mu_{g.b} = c_{g*} \mu_b
  \tag{\ref{eq:muInv} revisited}
\end{equation}
for any ${g \in G}$ and ${b \in B}$,
where
${c_{g*} \mu_b}$ is the pushforward measure
of $\mu_b$ along the conjugation
\begin{equation*}
  c_g \colon G \rightarrow G, \,
  h \mapsto g h g^{-1}.
\end{equation*}
Now
suppose we have $G$-equivariant real vector bundles
${E \rightarrow B}$ and
${F \rightarrow B}$.
We aim to transform a section ${f \in \Gamma(E)}$
to a section of ${F \rightarrow B}$.
So for each point ${b \in B}$
we need to give a vector in $F_b$
in terms of $f$.
When doing this by a cross-correlation generalizing \eqref{eq:xcorrCheck},
we obtain such a vector in $F_b$
as a weighted sum or integral
over the vector-values of the \enquote{Mackey function}
\begin{equation*}
  \tilde{f}(-, b) \colon G \rightarrow E_b, \,
  h \mapsto h^{-1}.f(h.b)
  .
\end{equation*}
More specifically,
the \emph{filter} $\omega$
gives a linear map
\begin{equation}
  \label{eq:omegaAsMap}
  \omega(h, b) \colon E_b \rightarrow F_b
\end{equation}
for each ${b \in B}$ and ${h \in G}$ so a value in $F_b$
can be obtained as an integral
\begin{equation}
  \label{eq:omegaIntegralInFiber}
  \int_G \omega(h, b)\big(\tilde{f}(h, b)\big) d\mu_b(h) \in F_b.
\end{equation}

In order to formalize the idea that $\omega$ is continuous
as an assignment of linear maps \eqref{eq:omegaAsMap},
we use the homomorphism bundle
${\mathrm{Hom}(E, F) \rightarrow B}$
whose fiber above ${b \in B}$ is the vector space of linear maps
${E_b \rightarrow F_b}$.
So formally,
we assume that $\omega$ is a continuous lift
in the commutative diagram
\begin{equation}
  \label{eq:omegaLift}
  \begin{tikzcd}[ampersand replacement=\&,row sep=6ex]
    \&
    \mathrm{Hom}(E, F)
    \arrow[d]
    \\
    G \times B
    \arrow[r]
    \arrow[ru, "\omega"]
    \&
    B
    \\[-6ex]
    (h, b)
    \arrow[r, mapsto]
    \&
    b.
  \end{tikzcd}
\end{equation}
Now in order for the integral \eqref{eq:omegaIntegralInFiber}
to be well-defined
we impose that ${\omega(-, b) \colon G \rightarrow \mathrm{Hom}(E_b, F_b)}$
has compact support for any ${b \in B}$
and in order for the map
${\Gamma(E) \rightarrow \Gamma(F)}$
defined by $\omega$ to be $G$-equivariant,
we impose the constraint
\begin{equation}
  \label{eq:omegaConstraint}
  \omega\big(ghg^{-1}, g.b\big)(g.v) = g.\omega(h, b)(v)
\end{equation}
for all ${g, h \in G}$, ${b \in B}$, and ${v \in E_b}$.
Thus,
if ${D \subseteq B}$ is some fundamental domain,
then the filter $\omega$ is fully determined
by all partially applied maps
\begin{equation*}
  \omega(-, b) \colon G \rightarrow
  \mathrm{Hom}(E_b, F_b) \cong \R^{m \times n}
\end{equation*}
for ${b \in D}$ (where ${m, n \in \N}$).
Moreover,
for any one such partially applied map
the equation \eqref{eq:omegaConstraint} yields the constraint
\begin{equation*}
  \omega\big(ghg^{-1}, b\big)(g.v) = g.\omega(h, b)(v)
\end{equation*}
for all ${g \in G_b}$, ${h \in G}$, and ${v \in E_b}$.

Now in light of \cref{thm:isoMackey},
to describe a linear $G$-equivariant map
${\Gamma(E) \rightarrow \Gamma(F)}$
using $\omega$ 
we may as well describe a linear $G$-equivariant map
${M(E) \rightarrow M(F)}$.
This appears to be a sensible choice,
considering the use of $\tilde{f}$
in the integral \eqref{eq:omegaIntegralInFiber}.

\begin{definition}[Group Cross-Correlations]
  \label{definition:xcorr}
  For a Mackey section ${\tilde{f} \in M(E)}$
  we define the
  \emph{cross-correlation}
  ${\omega \star \tilde{f} \in M(F)}$
  by
  \begin{equation}
    \label{eq:xcorr}
    \big(\omega \star \tilde{f}\big)(h, b) \coloneqq
    \int_G \omega(k, b)\big(\tilde{f}(hk, b)\big) d\mu_b(k)
  \end{equation}
  for all ${h \in G}$ and ${b \in B}$.
\end{definition}

\begin{theorem}[Well-Defined Cross-Correlations]
  \label{thm:isMackeySection}
  For ${\tilde{f} \in M(E)}$
  the cross-correlation
  ${\omega \star \tilde{f}}$
  is indeed a Mackey section
  in the sense of \cref{definition:mackeySection}.
\end{theorem}

\begin{proof}
  By construction of ${\omega \star \tilde{f}}$
  we have the commutative diagram
  \begin{equation*}
    \begin{tikzcd}[ampersand replacement=\&,row sep=6ex, column sep=9ex]
      \&
      F
      \arrow[d]
      \\
      G \times B
      \arrow[r]
      \arrow[ru, "\omega \star \tilde{f}"]
      \&
      B
      \\[-6ex]
      (h, b)
      \arrow[r, mapsto]
      \&
      b.
    \end{tikzcd}
  \end{equation*}
  Now let ${g, h \in G}$ and ${b \in B}$.
  In order to complete our proof,
  we have to show the equation \eqref{eq:periodicity}
  with ${\omega \star \tilde{f}}$ substituted for
  ${\tilde{f}}$.
  And indeed we have
  \begin{align*}
    \big(\omega \star \tilde{f}\big)(h, g.b)
    &\stackrel{\eqref{eq:xcorr}}{=}
      \int_G \omega(k, g.b)\big(\tilde{f}(hk, g.b)\big) d\mu_{g.b}(k)
    \\
    &\stackrel{\eqref{eq:periodicity}}{=}
      \int_G \omega(k, g.b)\big(g.\tilde{f}(hkg, b)\big) d\mu_{g.b}(k)
    \\
    &\,\stackrel{\eqref{eq:muInv}}{=}
      \int_G \omega(k, g.b)\big(g.\tilde{f}(hkg, b)\big) dc_{g*} \mu_{b}(k)
    \\
    &~=
      \int_G \omega\big(gkg^{-1}, g.b\big)\big(g.\tilde{f}(hgk, g.b)\big) d\mu_{b}(k)
    \\
    &\stackrel{\eqref{eq:omegaConstraint}}{=}
      \int_G g.\omega(k, b)\big(\tilde{f}(hgk, g.b)\big) d\mu_{b}(k)
    \\
    &~=
      g.\int_G \omega(k, b)\big(\tilde{f}(hgk, g.b)\big) d\mu_{b}(k)
    \\
    &\stackrel{\eqref{eq:xcorr}}{=}
      g.\big(\omega \star \tilde{f}\big)(hg, b).
  \end{align*}
  Here the fourth equality follows from the change of variables formula
  for the pushforward measure $c_{g*}\mu_b$.
\end{proof}

\begin{remark}
  If the measure ${\mu_b \colon \mathcal{B}(G) \rightarrow [0, \infty]}$
  is left-invariant for all ${b \in B}$,
  then we may write the cross-correlation \eqref{eq:xcorr} also as
  \begin{equation}
    \label{eq:xcorrLeftInv}
    \begin{split}
      \big(\omega \star \tilde{f}\big)(h, b)
      &=
        \int_G \omega(k, b)\big(\tilde{f}(hk, b)\big) d\mu_b(k)
      \\
      &=
        \int_G \omega\big(h^{-1} k, b\big)\big(\tilde{f}(k, b)\big) d\mu_b(k)
    \end{split}
  \end{equation}
  for all ${h \in G}$ and ${b \in B}$,
  which is an adaptation of the formula provided by
  \mbox{\citep[Equation 7]{10.5555/3454287.3455107}} and
  \mbox{\citep[Definition 3.8]{Gerken2023-dm}}
  to the case of a not necessarily transitive group action.
  By defining
  \begin{equation*}
    \omega' \colon G \times B \rightarrow \mathrm{Hom}(E, F), \,
    (h, b) \mapsto \omega\big(h^{-1}, b\big)
  \end{equation*}
  we may then also write the cross-correlation \eqref{eq:xcorr}
  as a \emph{convolution}:
  \begin{equation*}
    \big(\omega' \ast \tilde{f}\big)(h, b) \coloneqq
    \int_G \omega'\big(k^{-1}h, b\big)\big(\tilde{f}(k, b)\big) d\mu_b(k)
  \end{equation*}
  for ${h \in G}$ and ${b \in B}$.
  Note the subtle difference in notation with
  $\ast$ substituted for $\star$.
  Indeed,
  we have
  \begin{align*}
    \big(\omega' \ast \tilde{f}\big)(h, b)
    &=
      \int_G \omega'\big(k^{-1}h, b\big)\big(\tilde{f}(k, b)\big) d\mu_b(k)
    \\
    &=
      \int_G \omega\big(h^{-1}k, b\big)\big(\tilde{f}(k, b)\big) d\mu_b(k)
    \\
    &\!\!\stackrel{\eqref{eq:xcorrLeftInv}}{=}
      \big(\omega \star \tilde{f}\big)(h, b).
  \end{align*}
  However,
  as this can only be done for left-invariant measures,
  we stick with \cref{definition:xcorr}
  albeit the clumsy wording.
\end{remark}

As we defined this general notion of a cross-correlation
as a transformation outputting Mackey sections,
the proof of $G$-equivariance is a straightforward calculation.

\begin{lemma}[Equivariance of Cross-Correlations]
  \label{thm:xcorrEquivariance}
  The map
  \[\omega \star - \colon M(E) \rightarrow M(F), \,
    \tilde{f} \mapsto \omega \star \tilde{f}\]
  is $G$-equivariant.
\end{lemma}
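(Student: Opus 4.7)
The plan is to verify equivariance pointwise, by unwinding both $\omega \star (g.\tilde f)$ and $g.(\omega \star \tilde f)$ on an arbitrary pair $(h,b) \in G \times B$ using the definitions. By \cref{thm:isMackeySection} we already know $\omega \star \tilde f \in M(F)$ for any $\tilde f \in M(E)$, so the map $\omega \star -$ is well defined on Mackey sections and it only remains to check it commutes with the $G$-action.

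The key observation is that the $G$-action on Mackey sections, namely $(g.\tilde f)(h,b) = \tilde f(g^{-1}h, b)$, touches only the group argument and leaves the base point $b$ (and therefore the measure $\mu_b$ and the filter slice $\omega(-,b)$) untouched. First I would apply \cref{definition:xcorr} to expand
\[
  (\omega \star (g.\tilde f))(h,b)
  = \int_G \omega(k,b)\bigl((g.\tilde f)(hk, b)\bigr) \, d\mu_b(k)
  = \int_G \omega(k,b)\bigl(\tilde f(g^{-1}hk, b)\bigr) \, d\mu_b(k).
\]
Then I would expand the right-hand side, using the action on $M(F)$ followed by \cref{definition:xcorr} applied at the shifted point $(g^{-1}h, b)$:
\[
  (g.(\omega \star \tilde f))(h,b)
  = (\omega \star \tilde f)(g^{-1}h, b)
  = \int_G \omega(k,b)\bigl(\tilde f(g^{-1}hk, b)\bigr) \, d\mu_b(k).
\]
Comparing the two expressions shows they are identical, and linearity of $\omega \star -$ is immediate from linearity of the integrand in $\tilde f$.

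I do not anticipate any real obstacle here: precisely because the action on Mackey sections is such a clean left-shift in the $G$-argument, there is no need to invoke the compatibility $\mu_{g.b} = c_{g*}\mu_b$ nor the equivariance constraint \eqref{eq:omegaConstraint} on $\omega$ in this proof -- those hypotheses did their work already in \cref{thm:isMackeySection} to ensure $\omega \star \tilde f$ satisfies the Mackey periodicity \eqref{eq:periodicity}. The $G$-equivariance of $\omega \star -$ itself is then essentially formal, and the proof reduces to the two-line calculation above.
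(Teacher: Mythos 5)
Your proposal is correct and matches the paper's proof: both reduce to the same one-line computation that the left-shift action on the group argument passes through the integral, since neither the measure $\mu_b$ nor the filter slice $\omega(-,b)$ depends on the group argument. Your remark that the compatibility of the measures and the constraint \eqref{eq:omegaConstraint} are not needed here (having already been used in \cref{thm:isMackeySection}) is also consistent with the paper's argument.
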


\begin{proof}
  For ${\tilde{f} \in M(E)}$,
  ${g, h \in G}$, and ${b \in B}$ we have
  \begin{align*}
    \big(\omega \star g.\tilde{f}\big)(h, b)
    &=
      \int_G \omega(k, b)\big((g.\tilde{f})(hk, b)\big) d\mu_b(k)
    \\
    &=
      \int_G \omega(k, b)\big(\tilde{f}\big(g^{-1}hk, b\big)\big) d\mu_b(k)
    \\
    &=
    \big(\omega \star \tilde{f}\big)\big(g^{-1}h, b\big)
    \\
    &=
      \big(g.\big(\omega \star \tilde{f}\big)\big)(h, b).
      \qedhere
  \end{align*}
\end{proof}

\section{Orbitwise Integral Transforms}
\label{sec:orbitwiseIntegralTrafos}

In the previous \cref{sec:xcorrFilter}
we introduced a form of cross-correlation
transforming sections of some $G$-equivariant real vector bundle
${E \rightarrow B}$
to sections of another such vector bundle
${F \rightarrow B}$.
In the remainder of this paper,
we compare this notion of a cross-correlation
to that of
an integral transform of sections
${T_{\kappa} \colon \Gamma(E) \rightarrow \Gamma(F)}$
for some kernel $\kappa$.
Informally,
a kernel $\kappa$ is an assignment of a linear map
\begin{equation}
  \label{eq:kappaAsMap}
  \kappa(c, b) \colon E_c \rightarrow F_b
\end{equation}
to any ${b \in B}$ and any $c$ within the receptive field of $b$;
so the value of the integral transform ${T_{\kappa}(f)}$ of a section
${f \in \Gamma(E)}$ at ${b \in B}$ can be written as
\begin{equation}
  \label{eq:kappaIntegralInFiber}
  T_{\kappa}(f)(b) = \int \kappa(c, b)(f(c)) dc \in F_b
\end{equation}
with the domain of integration and its measure to be determined.

Now suppose ${f \in \Gamma(E)}$ is a section of $E$
and that ${\omega \colon G \times B \rightarrow \mathrm{Hom}(E, F)}$
is a filter as in the previous \cref{sec:xcorrFilter}.
Then the value of the resulting section
${\big(\omega \star \tilde{f}\big)(e, -) \in \Gamma(F)}$
at some point ${b \in B}$ can be written as
\begin{equation*}
  \int_G \omega(h, b)\big(\tilde{f}(h, b)\big) d\mu_b(h) \in F_b.
  \tag{\ref{eq:omegaIntegralInFiber} revisited}
\end{equation*}
Moreover,
as the Mackey function
${\tilde{f}(-, b) \colon G \rightarrow E_b}$
only sees values of $f$ at points that are in the same orbit as $b$,
the receptive field of $b$ is constrained to its orbit ${G.b \subseteq B}$.
So in order to obtain an integral transform $T_{\kappa}$
comparable to cross-correlations in the sense of \cref{definition:xcorr},
we assume $G.b$ to be the domain of integration
in \eqref{eq:kappaIntegralInFiber}
and the kernel $\kappa$
to be defined on
\begin{equation*}
  \{(c, b) \in B \times B \mid c \in G.b\} =
  \bigsqcup_{b \in B} G.b,
\end{equation*}
where the right-hand side denotes the
disjoint union
as a set endowed with the subspace topology of ${B \times B}$.
Then in order to use $G.b$ as a domain of integration
in \eqref{eq:kappaIntegralInFiber},
we also need
a locally finite Borel measure
${\bar{\mu}_b \colon \mathcal{B}(G.b) \rightarrow [0, \infty]}$.
So we also assume that we have a family
\begin{equation*}
  \{\bar{\mu}_b \colon \mathcal{B}(G.b) \rightarrow [0, \infty]\}_{b \in B}
\end{equation*}
of locally finite Borel measures.
As the scope of this paper is confined to
$G$-equivariant integral transforms,
we further impose the equation
\begin{equation}
  \label{eq:muBarInv}
  \bar{\mu}_{g.b} = (g.\_)_* \bar{\mu}_b
\end{equation}
for all ${g \in G}$ and ${b \in B}$,
where ${(g.\_)_* \bar{\mu}_b}$ is the pushforward measure
of $\bar{\mu}_b$ along the self-homeomorphism
\[g.\_ \colon B \rightarrow B, \,
  b \mapsto g.b.\]
In particular,
the measure $\bar{\mu}_b$ is $G_b$-invariant
for any ${b \in B}$.
For an in depth discussion
on how such families of measures
satisfying even more restrictive constraints
such as $G$-invariance
can be obtained in a natural way,
consider the Appendix \ref{sec:famInvMeas}.

Now in order to formalize the idea that $\kappa$ is continuous
as an assignment of linear maps \eqref{eq:kappaAsMap},
we view the natural surjections
${E \times B \rightarrow B \times B}$ and
${B \times F \rightarrow B \times B}$
as vector bundles over ${B \times B}$
and we assume that $\kappa$ is a continuous lift
in the commutative diagram
\begin{equation*}
  \begin{tikzcd}[ampersand replacement=\&,row sep=6ex]
    \&
    \mathrm{Hom}(E \times B, B \times F)
    \arrow[d]
    \\
    \displaystyle{\bigsqcup_{b \in B} G.b}
    \arrow[r, hook]
    \arrow[ru, "\kappa"]
    \&
    B \times B\!
  \end{tikzcd}
\end{equation*}
with each partially applied lift
${\kappa(-, b) \colon G.b \rightarrow \mathrm{Hom}(E, F_b)}$
for ${b \in B}$ compactly supported.
The associated \emph{orbitwise integral transform}
${T_{\kappa} \colon \Gamma(E) \rightarrow \Gamma'(F), \,
  f \mapsto T_{\kappa}(f)}$
is then defined by
\begin{equation}
  \label{eq:orbitwiseIntegralTrafo}
  T_{\kappa}(f) \colon B \rightarrow F, \,
  b \mapsto \int_{G.b} \kappa(c, b)(f(c)) d\bar{\mu}_b(c),
\end{equation}
where $\Gamma'(F)$ denotes the vector space
of all not necessarily continuous sections of the vector bundle
${F \rightarrow B}$.
As we did not impose any relation or compatibility
on the measures $\bar{\mu}_b$ and $\bar{\mu}_c$
for ${G.b \neq G.c}$,
we cannot guarantee continuity for sections obtained
as the output of $T_{\kappa}$.
However,
if we can write
${T_{\kappa} \colon \Gamma(E) \rightarrow \Gamma'(F)}$
using cross-correlations,
as we will discuss in \cref{sec:integralTrafosAsXCorr},
then the continuity of the sections it outputs follows a posteriori.

\subsection{Equivariance of Orbitwise Integral Transforms}
\label{sec:equivIntegralTrafo}

In the context of $G$-invariant measures on the base space $B$,
constraints on kernels entailing their integral transforms
be $G$-equivariant have been widely studied.
In the following we show that essentially the same constraint
as provided by \mbox{\citep[Section 4.2]{Gerken2023-dm}}
is sufficient and under suitable tameness assumptions also necessary
for an orbitwise integral transform
as defined by \eqref{eq:orbitwiseIntegralTrafo}
to be $G$-equivariant.
More specifically, this constraint on a kernel $\kappa$ as above is
that we have the equation
\begin{equation}
  \label{eq:kappaConstraint}
  g.\kappa(c, b)(v) = \kappa(g.c, g.b)(g.v)
\end{equation}
for all ${g \in G}$, ${b \in B}$, ${c \in G.b}$, and ${v \in E_c}$.

\begin{lemma}
  \label{thm:kappaConstraintImpliesEquivariance}
  If we have equation \eqref{eq:kappaConstraint}
  for all ${g \in G}$, ${b \in B}$, ${c \in G.b}$, and ${v \in E_c}$,
  then the integral transform
  ${T_{\kappa} \colon \Gamma(E) \rightarrow \Gamma'(F)}$
  is $G$-equivariant.
\end{lemma}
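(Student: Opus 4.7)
The plan is to verify $G$-equivariance pointwise, i.e. to show that for every $g \in G$, $f \in \Gamma(E)$, and $b \in B$ we have
\[
T_\kappa(g.f)(b) = (g.T_\kappa(f))(b) = g.T_\kappa(f)(g^{-1}.b),
\]
the last equality being the definition \eqref{eq:conjAction} of the conjugation action on sections of $F \rightarrow B$. I would start by unfolding the left-hand side via the definition \eqref{eq:orbitwiseIntegralTrafo} of $T_\kappa$ and of $g.f$, obtaining
\[
T_\kappa(g.f)(b) = \int_{G.b} \kappa(c, b)\bigl(g.f(g^{-1}.c)\bigr) d\bar{\mu}_b(c).
\]

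Next I would perform a change of variables along the self-homeomorphism ${g.\_ \colon B \to B}$. Setting ${b' \coloneqq g^{-1}.b}$ in the family compatibility \eqref{eq:muBarInv} yields the identity ${\bar{\mu}_b = (g.\_)_* \bar{\mu}_{g^{-1}.b}}$, and since ${g.\_}$ restricts to a homeomorphism ${G.(g^{-1}.b) \to G.b}$ the pushforward formula gives
\[
\int_{G.b} \kappa(c, b)\bigl(g.f(g^{-1}.c)\bigr) d\bar{\mu}_b(c) = \int_{G.(g^{-1}.b)} \kappa(g.c', b)\bigl(g.f(c')\bigr) d\bar{\mu}_{g^{-1}.b}(c').
\]
Now I would invoke the kernel constraint \eqref{eq:kappaConstraint} in the form ${\kappa(g.c', b)(g.v) = g.\kappa(c', g^{-1}.b)(v)}$ with ${v = f(c') \in E_{c'}}$, which transforms the integrand into ${g.\kappa(c', g^{-1}.b)(f(c'))}$.

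Finally, since $g.\_ \colon F_{g^{-1}.b} \to F_b$ is a (continuous) linear map, I would pull it out of the integral to obtain
\[
g.\int_{G.(g^{-1}.b)} \kappa(c', g^{-1}.b)(f(c')) d\bar{\mu}_{g^{-1}.b}(c') = g.T_\kappa(f)(g^{-1}.b),
\]
recognizing the right-hand side as $(g.T_\kappa(f))(b)$ by \eqref{eq:conjAction}. I do not anticipate a real obstacle here; the only subtlety to double-check is the change of variables step and the fact that the orbit ${G.(g^{-1}.b)}$ equals ${G.b}$ (up to relabeling), so that the domain of integration transforms correctly under the pushforward. Linearity of $g.\_$ on fibers, needed to commute it with the integral, is immediate from the fact that $E \rightarrow B$ and $F \rightarrow B$ are $G$-equivariant vector bundles.
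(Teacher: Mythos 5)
Your proof is correct and takes essentially the same route as the paper's: unfold the definition, use the compatibility \eqref{eq:muBarInv} as a change of variables along ${g.\_}$, apply the kernel constraint \eqref{eq:kappaConstraint}, and pull the fiberwise linear action of $g$ out of the integral. The only (immaterial) difference is the order of operations --- you change variables before invoking \eqref{eq:kappaConstraint}, whereas the paper applies the constraint first and then rewrites the measure.
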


\begin{proof}
  For ${f \in \Gamma(E)}$, ${g \in G}$ and ${b \in B}$
  let ${b' \coloneqq g^{-1}.b}$.
  Then we have
  \begin{align*}
    T_{\kappa}(g.f)(b)
    &=
      \int_{G.b} \kappa(c, b)((g.f)(c)) d\bar{\mu}_b(c)
    \\[1ex]
    &=
      \int_{G.b} \kappa(c, b)\big(g.f\big(g^{-1}.c\big)\big) d\bar{\mu}_b(c)
    \\[1ex]
    &=
      \int_{G.b} \kappa\big(gg^{-1}.c, gg^{-1}.b\big)\big(
      g.f\big(g^{-1}.c\big)
      \big) d\bar{\mu}_b(c)
    \\[1ex]
    &\!\!\stackrel{\eqref{eq:kappaConstraint}}{=}
      \int_{G.b} g.\kappa\big(g^{-1}.c, g^{-1}.b\big)\big(
      f\big(g^{-1}.c\big)
      \big) d\bar{\mu}_b(c)
    \\[1ex]
    &=
      g.\int_{G.b} \kappa\big(g^{-1}.c, b'\big)\big(
      f\big(g^{-1}.c\big)
      \big) d\bar{\mu}_{g.b'}(c)
    \\[1ex]
    &\!\!\stackrel{\eqref{eq:muBarInv}}{=}
      g.\int_{G.b} \kappa\big(g^{-1}.c, b'\big)\big(
      f\big(g^{-1}.c\big)
      \big) d(g.\_)_*\bar{\mu}_{b'}(c)
    \\[1ex]
    &=
      g.\int_{G.b} \kappa(c, b')(f(c)) d\bar{\mu}_{b'}(c)
    \\[1ex]
    &=
      g.T_{\kappa}(f)(b')
    \\[1ex]
    &=
      g.T_{\kappa}(f)\big(g^{-1}.b\big)
    \\[1ex]
    &=
      \big(g.T_{\kappa}(f)\big)(b).
      \qedhere
  \end{align*}
\end{proof}

\begin{proposition}
  \label{thm:equivToConstraint}
  Suppose that the integral transform
  ${T_{\kappa} \colon \Gamma(E) \rightarrow \Gamma'(F)}$
  is $G$-equivariant.
  Moreover,
  for any ${b \in B}$
  we assume that the measure
  ${\bar{\mu}_b \colon \mathcal{B}(G.b) \rightarrow [0, \infty]}$
  is strictly positive,
  that $G.b$ is paracompact,
  and that any compactly supported continuous section
  of the restricted vector bundle
  ${E |_{G.b} \rightarrow G.b}$
  has a continuous extension to a section of ${E \rightarrow B}$,
  where ${E |_{G.b} \coloneqq \bigcup_{c \in G.b} E_c}$.
  Then we have the equation \eqref{eq:kappaConstraint}
  for all ${g \in G}$, ${b \in B}$, ${c \in G.b}$, and ${v \in E_c}$.
\end{proposition}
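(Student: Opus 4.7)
The plan is to fix all parameters $g \in G$, $b \in B$, $c \in G.b$, $v \in E_c$, set $b' := g^{-1}.b$ and $c' := g^{-1}.c \in G.b'$, and observe that \eqref{eq:kappaConstraint} is equivalent to $\kappa(g.c', g.b')(g.v) = g.\kappa(c', b')(v)$, an equation in the fiber $F_{g.b'} = F_b$. The strategy is to deduce this pointwise identity from an integrated version by constructing test sections of $E$ that approximate a Dirac mass at $c'$ with fiber value $v$.

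First I would re-run the calculation from the preceding lemma's proof in reverse. Evaluating $G$-equivariance $T_\kappa(g.f) = g.T_\kappa(f)$ at $b$ and transporting the left-hand integral from $G.b$ to $G.b'$ by the change-of-variables formula under the pushforward \eqref{eq:muBarInv} (substituting $c = g.c'$) puts both sides on the common domain $G.b'$. Subtracting yields
\begin{equation*}
  \int_{G.b'}
  \big[\kappa(g.c', g.b')(g.f(c')) - g.\kappa(c', b')(f(c'))\big]
  \, d\bar{\mu}_{b'}(c') = 0
\end{equation*}
in $F_{g.b'}$, valid for every $f \in \Gamma(E)$. The next step is to localize this identity.

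Using paracompactness of $G.b'$ and the local triviality of $E|_{G.b'}$, I pick an open $V \subseteq G.b'$ containing $c'$ together with a continuous local section $\tilde v$ of $E|_{G.b'}$ on $V$ with $\tilde v(c') = v$. For any continuous scalar bump $\phi \colon G.b' \to [0, \infty)$ that is compactly supported in $V$ and satisfies $\phi(c') = 1$, the product $\phi \cdot \tilde v$, extended by zero outside $V$, is a compactly supported continuous section of $E|_{G.b'}$; by the extension hypothesis it lifts to some $f_\phi \in \Gamma(E)$. Substituting $f_\phi$ into the integral identity restricts the integrand to $V$ and produces
\begin{equation*}
  \int_V \phi(c') \, \Psi(c') \, d\bar{\mu}_{b'}(c') = 0,
\end{equation*}
where $\Psi(c') := \kappa(g.c', g.b')(g.\tilde v(c')) - g.\kappa(c', b')(\tilde v(c'))$ is a continuous map $V \to F_{g.b'}$.

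Finally, I would conclude by a standard fundamental-lemma-of-the-calculus-of-variations argument: if $\Psi(c') \neq 0$, choose a linear functional $\ell$ on the finite-dimensional fiber $F_{g.b'}$ with $\ell(\Psi(c')) > 0$; by continuity $\ell \circ \Psi > 0$ on some neighborhood $W \subseteq V$ of $c'$, and selecting $\phi$ supported in $W$ with $\phi \geq 0$ and $\phi \not\equiv 0$, the strict positivity of $\bar{\mu}_{b'}$ gives $\ell\!\left(\int_V \phi \Psi \, d\bar{\mu}_{b'}\right) = \int_W \phi \cdot (\ell \circ \Psi) \, d\bar{\mu}_{b'} > 0$, contradicting vanishing of the integral. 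Hence $\Psi(c') = 0$, which is exactly \eqref{eq:kappaConstraint} at the chosen parameters. The main obstacle is the manufacturing step: orchestrating paracompactness (for local trivializations and bump functions), the extension hypothesis (for promoting local test sections on the orbit to genuine elements of $\Gamma(E)$), and strict positivity (for the functional-analytic punchline) into one coherent probing argument. Once these are in place, the rest is routine continuity.
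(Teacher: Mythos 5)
Your first half --- deriving from $G$-equivariance the integrated identity
$\int_{G.b'}\bigl[\kappa(g.c',g.b')(g.f(c'))-g.\kappa(c',b')(f(c'))\bigr]\,d\bar{\mu}_{b'}(c')=0$
for all $f\in\Gamma(E)$ --- is exactly the paper's first half (the paper runs the computation at $g.b$ rather than pulling everything back to $b'=g^{-1}.b$, but the identity is the same). The second half genuinely diverges. You localize: a bump $\phi$ times a local section $\tilde v$, extended via the hypothesis, then the fundamental lemma of the calculus of variations with a separating functional $\ell$ and strict positivity of the measure. The paper instead fixes a linear form $\alpha$ on $F_{g.b}$, views $c\mapsto\alpha\circ\xi(c)$ as a \emph{compactly supported} continuous section $\sigma$ of the dual bundle $E^*|_{G.b}$ (compactly supported because $\kappa$ is), and invokes \cref{thm:vanishingDualSections}: paracompactness yields a bundle metric, and testing the integral identity against the single section $\sigma^{\sharp}$ gives $\int\langle\sigma^{\sharp},\sigma^{\sharp}\rangle\,d\bar{\mu}_b=0$, so strict positivity kills $\sigma$. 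The trade-off: the paper's one test section $\sigma^{\sharp}$ is automatically compactly supported, so only the stated hypotheses are used; your route additionally needs $\phi$ to be \emph{compactly} supported inside $V$ so that $\phi\cdot\tilde v$ falls under the extension hypothesis, and on a merely paracompact, non--locally-compact orbit such a bump need not exist (an infinite-dimensional Banach space admits no nonzero compactly supported continuous function). Adding local compactness of the orbits --- true in every example in the paper --- closes your argument; against the literal hypotheses this is its one soft spot. A cosmetic point: after substituting $c'=g^{-1}.c$, your displayed target $g.\kappa(c',b')(v)$ requires $v\in E_{c'}$, so $v$ should be replaced by $g^{-1}.v$ there.
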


\begin{proof}
  Let ${f \in \Gamma(E)}$, ${b \in B}$, and ${g \in G}$.
  Then we have
  \begin{equation}
    \label{eq:equivToConstraintInt}
    \begin{split}
      \int_{G.b}
      g.\kappa(c, b)(f(c)) d\bar{\mu}_b(c)
      &=
        g.\int_{G.b}
        \kappa(c, b)(f(c)) d\bar{\mu}_b(c)
      \\
      &=
        g.T_{\kappa}(f)(b)
      \\
      &=
        \big(g.T_{\kappa}(f)\big)(g.b)
      \\
      &=
        T_{\kappa}(g.f)(g.b)
      \\
      &=
        \int_{G.b}
        \kappa(c, g.b)((g.f)(c)) d\bar{\mu}_{g.b}(c)
      \\
      &\!\!\stackrel{\eqref{eq:muBarInv}}{=}
        \int_{G.b}
        \kappa(c, g.b)\big(
        g.f\big(g^{-1}.c\big)
        \big) d(g.\_)_* \bar{\mu}_{b}(c)
      \\
      &=
        \int_{G.b}
        \kappa(g.c, g.b)(g.f(c)) d\bar{\mu}_{b}(c).
    \end{split}
  \end{equation}
  Now for ${c \in G.b}$ let
  \begin{equation*}
    \xi(c) \colon E_c \rightarrow F_{g.b}, \,
    v \mapsto
    \xi(c)(v) \coloneqq g.\kappa(c, b)(v) - \kappa(g.c, g.b)(g.v).
  \end{equation*}
  By the previous equation \eqref{eq:equivToConstraintInt}
  we have
  \begin{equation}
    \label{eq:xiVanishing}
    \int_{G.b}
    \xi(c)(f(c)) d\bar{\mu}_{b}(c)
    = 0
  \end{equation}
  for all sections ${f \in \Gamma(E)}$
  and we have to show that
  ${\xi(c)(v) = 0}$ for all ${c \in B}$ and ${v \in E_c}$.
  To this end,
  it suffices to show that
  \begin{equation*}
    (\alpha \circ \xi(c))(v) = 0
  \end{equation*}
  for all linear forms ${\alpha \colon F_{g.b} \rightarrow \R}$,
  ${c \in B}$, and ${v \in E_c}$.
  Now let
  ${\alpha \colon F_{g.b} \rightarrow \R}$
  be a linear form
  and let ${\sigma \in \Gamma_c(E^* |_{G.b})}$
  be defined by
  \[\sigma(v) \coloneqq (\alpha \circ \xi(c))(v)\]
  for all ${c \in G.b}$ and ${v \in E_c}$,
  where
  ${E^* |_{G.b} \coloneqq \bigcup_{c \in G.b} E_c^*}$
  is the restricted bundle of dual spaces.
  Moreover, suppose we have a
  compactly supported continuous section ${f \in \Gamma_c(E |_{G.b})}$
  of the restricted vector bundle
  ${E |_{G.b} \rightarrow G.b}$.
  By assumption there is a continuous extension
  ${\hat{f} \in \Gamma(E)}$ of $f$ to a section defined on all of $B$.
  Furthermore, we have
  \begin{align*}
    0
    &=
      \alpha(0)
    \\
    &\!\!\stackrel{\eqref{eq:xiVanishing}}{=}
      \alpha\left(
      \int_{G.b}
      \xi(c)\big(\hat{f}(c)\big) d\bar{\mu}_{b}(c)
      \right)
    \\
    &=
      \int_{G.b}
      (\alpha \circ \xi(c))\big(\hat{f}(c)\big) d\bar{\mu}_{b}(c)
    \\
    &=
      \int_{G.b}
      \sigma(f(c)) d\bar{\mu}_{b}(c)
      .
  \end{align*}
  Thus, we conclude from \cref{thm:vanishingDualSections}
  that ${\sigma = 0}$
  and hence
  \begin{equation*}
    (\alpha \circ \xi(c))(v) = \sigma(v) = 0
  \end{equation*}
  for all ${c \in B}$ and ${v \in E_c}$.
\end{proof}

\begin{corollary}
  \label{thm:equivToConstraintTransitive}
  Suppose the action ${G \curvearrowright B}$
  is transitive,
  that $B$ is paracompact,
  and that ${\bar{\mu}_b \colon \mathcal{B}(B) \rightarrow [0, \infty]}$
  is strictly positive for some (hence any) ${b \in B}$.
  If
  ${T_{\kappa} \colon \Gamma(E) \rightarrow \Gamma'(F)}$
  is $G$-equivariant,
  then we have the equation \eqref{eq:kappaConstraint}
  for all ${g \in G}$, ${b \in B}$, ${c \in G.b}$, and ${v \in E_c}$.  
\end{corollary}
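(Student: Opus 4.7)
The plan is simply to verify that every hypothesis of \cref{thm:equivToConstraint} is met in the transitive setting, and then invoke that proposition. Since the action is transitive, for any ${b \in B}$ we have ${G.b = B}$. Hence the paracompactness assumption on $G.b$ reduces to the assumed paracompactness of $B$, and the extension assumption on compactly supported continuous sections of ${E |_{G.b} \rightarrow G.b}$ becomes vacuous: any ${f \in \Gamma_c(E |_{G.b})}$ already is a section of ${E \rightarrow B}$ defined on all of $B$, so it is its own extension.

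Next I would argue that the strict positivity assumed at some ${b_0 \in B}$ propagates to every ${b \in B}$. By transitivity, pick ${g \in G}$ with ${g.b_0 = b}$. Then \eqref{eq:muBarInv} gives ${\bar{\mu}_b = (g.\_)_* \bar{\mu}_{b_0}}$, and since ${g.\_ \colon B \rightarrow B}$ is a homeomorphism, pushforward along it sends nonempty open sets bijectively to nonempty open sets; thus strict positivity of $\bar{\mu}_{b_0}$ transfers to strict positivity of $\bar{\mu}_b$. This justifies the parenthetical \emph{hence any} in the hypothesis.

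With all four hypotheses of \cref{thm:equivToConstraint} verified, I would conclude by directly applying that proposition to obtain equation \eqref{eq:kappaConstraint} for all ${g \in G}$, ${b \in B}$, ${c \in G.b}$, and ${v \in E_c}$.

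The proof is essentially bookkeeping, so there is no real obstacle; the only subtlety worth stating explicitly is the propagation of strict positivity, since the corollary's hypothesis weakens this to a single orbit representative while the proposition demands it for every ${b \in B}$.
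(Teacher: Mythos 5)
Your proof is correct and matches the paper's intent: the corollary is stated without proof precisely because it is the direct specialization of \cref{thm:equivToConstraint} to the transitive case, where $G.b = B$ makes the paracompactness and extension hypotheses immediate and \eqref{eq:muBarInv} propagates strict positivity along the orbit. Your explicit verification of these points, including the pushforward argument for strict positivity, is exactly the intended bookkeeping.
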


\section{Integral Transforms as Cross-Correlations}
\label{sec:integralTrafosAsXCorr}

In this \cref{sec:integralTrafosAsXCorr}
we establish a close relationship
between orbitwise integral transforms
and cross-correlations.
In particular,
we will provide a construction
for writing a $G$-equivariant orbitwise integral transform
associated to a kernel $\kappa$
as a cross-correlation with a filter $\omega$.
As it turns out,
such a filter $\omega$ may not be fully determined by $\kappa$.
So in general,
lifting an integral transform to a cross-correlation
requires some choices to be made.
Before we discuss this in full generality,
we demonstrate at a simple example
how this may require some trade-offs.

\subsection{Example with Real Numbers and Integers}
\label{sec:example}

Let ${B \coloneqq \R}$ and let ${G \coloneqq \R \times \Z}$.
We define a $G$-action on $B$ by setting
\begin{equation*}
  g.b \coloneqq g_1 + g_2 + b
\end{equation*}
for all ${g = (g_1, g_2) \in \R \times \Z}$ and ${b \in \R}$.
Moreover, we assume we have identical trivial vector bundles
\begin{equation*}
  E \coloneqq B \times \R \eqqcolon F \rightarrow B, \,
  (b, v) \mapsto b;
\end{equation*}
so we also identify their sections with continuous functions
${B \rightarrow \R}$.
Furthermore,
the action on ${E = F}$
is confined to the first component:
\begin{equation*}
  g.(b, v) \coloneqq (g.b, v)
\end{equation*}
for ${g \in G}$ and ${(b, v) \in \R \times \R = E}$.
So for a function
${f \colon B \rightarrow \R}$
as a section of $E$
its associated Mackey section can be identified with the function
\begin{equation*}
  \tilde{f} \colon G \times B \rightarrow \R, \,
  (g, b) \mapsto f(g.b).
\end{equation*}
For ${b, c \in B}$
the kernel $\kappa$ provides a linear map
${\kappa(c, b) \colon \R \rightarrow \R}$,
which we identify with the corresponding scalar coefficient
making the kernel a continuous function
${\kappa \colon B \times B \rightarrow \R}$.
Similarly,
we view any filter $\omega$ as a function
${\omega \colon G \times B \rightarrow \R}$.

Now let
${\bar{\mu} \colon \mathcal{B}(\R) \rightarrow [0, \infty]}$
to be the Lebesgue measure
and let
${\mu \colon \mathcal{B}(\R \times \Z) \rightarrow [0, \infty]}$
be the product measure of the Lebesgue measure on $\R$
and the counting measure on $\Z$.
As the Lebesgue measure also is a Haar measure
and as such translation invariant,
the constant family of measures
${\{\bar{\mu}_b \coloneqq \bar{\mu}\}_{b \in B}}$
satisfies the required constraint \eqref{eq:muBarInv}.
Moreover,
as $G$ is abelian
the constant family of measures
${\{\mu_b \coloneqq \mu\}_{b \in B}}$
satisfies the constraint \eqref{eq:muInv}.

Now if we had a filter
${\omega \colon G \times B \rightarrow \R}$
with the desired properties,
then in particular the equation
\begin{equation}
  \label{eq:exampleComparison}
  \begin{split}
    \int_G \omega(h, b)f(h.b) d\mu(h)
    &=
      \int_G \omega(h, b)\tilde{f}(h, b) d\mu(h)
    \\
    &=
      \big(\omega \star \tilde{f}\big)(e, b)
    \\
    &=
      T_{\kappa}(f)(b)
    \\
    &=
      \int_B \kappa(c, b) f(c) d\bar{\mu}(c)
  \end{split}  
\end{equation}
would be satisfied for all continuous functions
${f \colon B \rightarrow \R}$
and all ${b \in B}$.
So in order to find a filter $\omega$
satisfying \eqref{eq:exampleComparison},
we may continuously choose for each pair
${(c, b) \in \supp \kappa}$
an element ${\theta(c, b) \in G}$
with
\begin{equation}
  \label{eq:thetaLift}
  c = \theta(c, b).b
\end{equation}
and set
\begin{equation}
  \label{eq:omega_theta}
  \omega(\theta(c, b), b) \coloneqq \kappa(c, b)
\end{equation}
with all other values of $\omega$ set to $0$.
To this end,
we could define the continuous map
\begin{equation}
  \label{eq:thetaGlobal}
  \theta \colon B \times B \rightarrow G,\,
  (c, b) \mapsto 
  (c-b, 0),
\end{equation}
which works for any kernel
${\kappa \colon B \times B \rightarrow \R}$
irrespective of its support.

\subsubsection{Special Support of Kernel}

Now let
\[S_i \coloneqq
  \{(c, b) \in \R \times \R \mid |c-b-i| \leq \varepsilon\}
\]
for ${i = -1,0,1}$ and some small ${0 < \varepsilon < \frac{1}{2}}$
and suppose we have
\begin{equation*}
  \supp \kappa \subseteq R \coloneqq S_{-1} \cup S_0 \cup S_1
\end{equation*}
as pictured in \cref{fig:supp_kappa}.
\begin{figure}[t]
  \centering
  \supportGraphic
  \caption{The region $R$ containing the support of $\kappa$ shaded in red.}
  \label{fig:supp_kappa}
\end{figure}
In this case
we may choose for $\theta$ the continuous map 
\begin{equation}
  \label{eq:thetaSpecialCase}
  \theta \colon R \rightarrow G,\,
  (c, b) \mapsto
  \begin{cases}
    (c-b-1,  1) & (c, b) \in S_1 \\
    (c-b  ,  0) & (c, b) \in S_0 \\
    (c-b+1, -1) & (c, b) \in S_{-1}.
  \end{cases}
\end{equation}

As $G$ is abelian and as the action ${G \curvearrowright B}$ is transitive,
we have ${\omega(-, b) = \omega(-, b')}$
by the constraint \eqref{eq:omegaConstraint}
for all ${b, b' \in B}$,
see also \cref{sec:contraintsInvariances}.
Now let
${\omega' \coloneqq \omega(-, b) \colon G \rightarrow \R}$
%
for some (hence any) ${b \in B}$.
We consider the support of the filter $\omega'$
depending on our choice for $\theta$.
If we use $\theta$ as specified in \eqref{eq:thetaGlobal}
to define $\omega$ using equation \eqref{eq:omega_theta},
then we have
\begin{equation}
  \label{eq:omegaSuppGlobal}
  \supp \omega' \subseteq
  \bigcup_{i=-1,0,1} [i-\varepsilon, i+\varepsilon] \times \{0\}
\end{equation}
whereas if we use $\theta$ as specified in \eqref{eq:thetaSpecialCase},
then we have
\begin{equation}
  \label{eq:omegaSuppSpecialCase}
  \supp \omega' \subseteq
  [-\varepsilon, \varepsilon] \times \{-1, 0, 1\}.
\end{equation}
In case we have \eqref{eq:omegaSuppSpecialCase} for the support,
then any discretization of $\omega'$
can be represented by a fully populated 2D array,
which is untrue for \eqref{eq:omegaSuppGlobal}.

So clearly,
there is a trade-off to be made here.
On the one hand,
we have the construction using \eqref{eq:thetaGlobal}
that works irrespective of the support of $\kappa$,
and on the other hand
there is the construction using \eqref{eq:thetaSpecialCase}
that only works for
${\supp \kappa \subseteq R}$
but it has the benefit that $\omega'$ can be discretized
by a fully populated 2D array.
For this reason,
our general construction of the filter $\omega$
will not only depend on the kernel $\kappa$ itself
but also on a choice as we had it here with $\theta$.

\subsubsection{Comparison to \enquote{Bi-Equivariant Kernels}}
\label{sec:comparison}

Before we proceed with the construction of filters from kernels,
we use the above example,
to compare the present notion of a group cross-correlation
to the approach by
\citet{10.5555/3454287.3455107},
which is also surveyed in \mbox{\citep[Section 3.2]{Gerken2023-dm}}.
In their terminology
the function
${\omega' \colon G \rightarrow \R}$
is a \enquote{one-argument kernel}
and their constraint on $\omega'$
(which in their notation is $\kappa$
for both,
their one- and their two-argument kernel)
is bi-equivariance with respect to the stabilizer
of the group action ${G \curvearrowright B}$;
this is \mbox{\citep[Theorem 3.2]{10.5555/3454287.3455107}}.
Specialized to the present example
of the abelian group $G$
and trivial vector bundles over $B$,
bi-equivariance amounts to invariance
with respect to addition of elements in the stabilizer
${\{(k, -k)\}_{k \in \Z} \subset G}$,
i.e.
\begin{equation}
  \label{eq:stabilizerInvariance}
  \omega'(g_1, g_2) \stackrel{!}{=} \omega'(g_1 + k, g_2 - k)
\end{equation}
for all ${g_1 \in \R}$ and ${g_2, k \in \Z}$.
Now independent of our choice for the map
${\theta \colon \supp \kappa \rightarrow G}$,
any filter
${\omega' \colon G \rightarrow \R}$
obtained by the above construction
will not satisfy this equation \eqref{eq:stabilizerInvariance}.

Now suppose that by some other means
we obtained some continuous function
${\omega' \colon G \rightarrow \R}$
satisfying the equation \eqref{eq:stabilizerInvariance}
for all ${g_1 \in \R}$ and ${g_2, k \in \Z}$.
As $G$ is an abelian group acting transitively on ${B = \R}$
we now use the simplified form of a cross-correlation
provided by \eqref{eq:xcorrHatGeneralized}
with respect to the measure
${\mu' \coloneqq \mu}$.
Moreover,
let ${f \colon B \rightarrow \R}$
be a continuous function
and let ${b \in B}$.
Then as far as the first integral of the following calculation is finite,
Fubini's theorem implies
\begin{align*}
  \big(\omega' \hatstar f\big)(b)
  &=
    \int_G \omega'(g) f(g.b) d\mu'(g)
  \\
  &=
    \sum_{g_2 \in \Z~}
    \int_{-\infty}^{\infty}
    \omega'(g_1, g_2) f(g_1 + g_2 + b)
    dg_1
  \\
  &\!\!\stackrel{\eqref{eq:stabilizerInvariance}}{=}
    \sum_{g_2 \in \Z~}
    \int_{-\infty}^{\infty}
    \omega'(g_1 + g_2, g_2 - g_2) f(g_1 + g_2 + b)
    dg_1
  \\
  &=
    \sum_{g_2 \in \Z~}
    \int_{-\infty}^{\infty}
    \omega'(g_1 + g_2, 0) f(g_1 + g_2 + b)
    dg_1
  \\
  &=
    \sum_{g_2 \in \Z~}
    \int_{-\infty}^{\infty}
    \omega'(g_1, 0) f(g_1 + b)
    dg_1
\end{align*}
and hence ${\big(\omega' \hatstar f\big)(b) = 0}$.
So for the group action ${G \curvearrowright B}$,
bi-equivariance of the filter/\enquote{one-argument kernel} $\omega'$
results in vanishing, degenerate, or ill-defined cross-correlations.
With that said,
the present example is ruled out
when assuming compact stabilizers as in
\mbox{\citep[Remark 3.2]{Gerken2023-dm}}.

In summary,
the constraint \eqref{eq:omegaConstraint}
on the filter $\omega$,
which specializes to no constraint on $\omega'$
for the present example,
is lenient enough to accommodate non-compact stabilizers.
Moreover,
while this additional flexibility also results
in more than one filter providing the same integral transform,
it allows the description as a cross-correlation
to inform the shape of the tensor holding
the trainable parameters of the filter $\omega'$
(or $\omega$ in more general settings).

\subsection{Compatible Measures}

When defining cross-correlations
in \cref{definition:xcorr},
we used a family of measures
${\{\mu_b \colon \mathcal{B}(G) \rightarrow [0, \infty]\}_{b \in B}}$
defined on the group $G$
and for integral transforms we use a family of measures
${\{
  \bar{\mu}_b \colon \mathcal{B}(G.b) \rightarrow [0, \infty]
  \}_{b \in B}}$
defined
on the orbits of the action ${G \curvearrowright B}$.
In order to link these two families of measures,
we assume there is a third family
\begin{equation*}
  \{\nu_b \colon \mathcal{B}(G_b) \rightarrow [0, \infty]\}_{b \in B}
\end{equation*}
of left-invariant locally finite Borel measures
on the stabilizers of the action ${G \curvearrowright B}$
with the following two properties.
Closely analogous to our constraints on the family
${\{\mu_b\}_{b \in B}}$
we require that we have
\begin{equation*}
  \nu_{g.b} = c_{g*} \nu_b
\end{equation*}
for all ${g \in G}$ and ${b \in B}$,
where ${c_{g*} \nu_b}$ is the pushforward measure of $\nu_b$
along the conjugation
\begin{equation*}
  c_g \colon G_b \rightarrow G_{g.b}, \,
  h \mapsto ghg^{-1}
\end{equation*}
here as a map between stabilizers.
Secondly,
relating the three families of measures,
we assume we have the equation
\begin{equation}
  \label{eq:fubini}
  \int_G
  f(h) d\mu_b(h)
  =
  \int_{G.b}
  \int_{G_b} f(kh) d\nu_b(h)
  d\bar{\mu}_b(k.b)  
\end{equation}
for all ${b \in B}$ and compactly supported continuous functions
${f \colon G \rightarrow \R}$.
In this equation \eqref{eq:fubini} we view $k.b$ as a pattern
being matched against all $c$ within the domain of integration $G.b$.
More specifically,
as we integrate over ${c \in G.b}$
the free variable $k$ is bound to some ${k \in G}$
such that ${c = k.b}$.
As the measure $\nu_b$ is left-invariant,
the value of the inner integral
\begin{equation*}
  \int_{G_b} f(kh) d\nu_b(h)
\end{equation*}
is independent of the particular choice of $k$
satisfying the equation ${c = k.b}$.

Now in order to construct a filter $\omega$ from a kernel $\kappa$
we will also need a way of associating to a real number ${r \in \R}$
a function ${f \colon G_b \rightarrow \R}$ (where ${b \in B}$)
whose integral
${\int_{G_b} f(h) d\nu_b(h)}$ evaluates to $r$.
If $G_b$ is compact,
then we may define $f$ to be the constant function
evaluating to ${r / \nu_b(G_b)}$.
However,
this construction only works when ${\nu_b(G_b)}$ is finite
and even then,
we might prefer to concentrate the distribution of values
towards the neutral element ${e \in G_b}$
in order to limit the support of the resulting filter $\omega$.
To this end,
we further assume we have a continuous function
\begin{equation*}
  \delta \colon
  \bigsqcup_{b \in B} G_b
  \rightarrow [0, \infty),
\end{equation*}
where
$\displaystyle{
  \bigsqcup_{b \in B} G_b = \{(h, b) \in G \times B \mid h \in G_b\}
}$
inherits the subspace topology from ${G \times B}$,
with the following properties.
For each ${b \in B}$
the partially applied function
${\delta(-, b) \colon G_b \rightarrow [0, \infty)}$
is compactly supported
and normed with respect to $\nu_b$:
\begin{equation}
  \label{eq:deltaNormed}
  \int_{G_b} \delta(h, b) d\nu_b(h) = 1.
\end{equation}
Moreover,
we assume we have the constraint
\begin{equation}
  \label{eq:deltaConstraint}
  \delta\big(ghg^{-1}, g.b\big) = \delta(h, b)
\end{equation}
for all ${g \in G}$, ${b \in B}$, and ${h \in G_b}$.

In most situations,
we may well prefer to choose ${\delta}$ in such a way
that integration of each partially applied function
${\delta(-, b) \colon G_b \rightarrow [0, \infty)}$ with ${b \in B}$
over any Borel set of ${G_b}$
provides an approximation of the dirac measure on $G_b$,
which sends Borel sets containing the neutral element $e$ to $1$
and all other Borel sets to $0$.

\begin{examples}
  In support of these assumptions,
  we provide the following two examples.
  \begin{enumerate}[(i)]
  \item
    In the example discussed in the previous \cref{sec:example},
    the stabilizer of any ${b \in B = \R}$
    is the discrete additive subgroup
    \begin{equation*}
      G_b =
      \{(g_1, g_2) \in \R \times \Z \mid g_1 - g_2 = 0\}
      \eqcolon H
      \cong \Z.
    \end{equation*}
    So we may choose
    ${\nu_b \colon \mathcal{B}(G_b) \rightarrow [0, \infty]}$
    to be the counting measure on ${G_b = H}$
    for all ${b \in B}$.
    With $\{\mu_b\}_{b \in B}$ and $\{\bar{\mu}_b\}_{b \in B}$
    defined as in \cref{sec:example},
    we also have the equation \eqref{eq:fubini}
    for all ${b \in B}$ and compactly support continuous
    ${f \colon G \rightarrow \R}$.
    Finally,
    we may define
    \begin{equation*}
      \delta \colon H \times B \rightarrow [0, \infty), \,
      h \mapsto
      \begin{cases}
        1 & h = (0, 0) \\
        0 & \text{otherwise,}
      \end{cases}
    \end{equation*}
    which is easily seen to satisfy the equations
    \eqref{eq:deltaNormed} and \eqref{eq:deltaConstraint}.
    Using these choices with the general construction that follows
    in \cref{sec:liftKernelToFilter},
    we recover the filter $\omega$
    we described in \cref{sec:example}
    (for the corresponding choice of $\theta$).
  \item
    For a more generic example,
    we assume the action ${G \curvearrowright B}$
    satisfies the \cref{assumption:GInvMeas}.
    In this case,
    which is a generalization of (i),
    the \cref{thm:compatFamMeas}
    provides families of measures
    $\{\mu_b\}_{b \in B}$, 
    $\{\bar{\mu}_b\}_{b \in B}$,
    and
    $\{\nu_b\}_{b \in B}$
    satisfying all of the above constraints (and more)
    as well as a function
    \begin{equation*}
      \psi \colon G \times B \rightarrow [0, \infty)
    \end{equation*}
    with
    ${\psi(-, b) \colon G \rightarrow [0, \infty)}$
    compactly supported for each ${b \in B}$,
    with
    \begin{equation*}
      \psi(h, b) = \psi\big(ghg^{-1}, g.b\big)
      \tag{\ref{eq:psiConstraint} anticipated}
    \end{equation*}
    for all ${g, h \in G}$ and ${b \in B}$,
    and with
    \begin{equation}
      \label{eq:psiRestrictedNormed}
      \int_{G_b}
      \psi(h, b) d\nu_b(h)
      = 1
    \end{equation}
    for all ${b \in B}$.
    Using the function
    ${\psi \colon G \times B \rightarrow [0, \infty)}$
    we define the function
    \begin{equation*}
      \delta \colon \bigsqcup_{b \in B} G_b \rightarrow [0, \infty), \,
      (h, b) \mapsto \psi(h, b)
    \end{equation*}
    as its restriction
    to
    ${\{(h, b) \in G \times B \mid h \in G_b\}}$.
    As ${\psi(-, b) \colon G \rightarrow [0, \infty)}$
    is compactly supported for each ${b \in B}$,
    corresponding partially applied functions
    ${\delta(-, b) \colon G_b \rightarrow [0, \infty)}$
    are compactly supported as well.
    Moreover,
    the equations
    \eqref{eq:deltaNormed} and \eqref{eq:deltaConstraint}
    follow directly from the equations
    \eqref{eq:psiRestrictedNormed} and \eqref{eq:psiConstraint}
    respectively.
  \end{enumerate}
\end{examples}

\subsection{Projection of Filters to Kernels}
\label{sec:projectFilterToKernel}

Before we get to lifting kernels
to filters for cross-correlations,
we provide a construction of the converse:
How an integral transform
can be obtained from a filter for cross-correlations.
To this end,
suppose we have $G$-equivariant real vector bundles
${E \rightarrow B}$ and ${F \rightarrow B}$
and let ${\omega \colon G \times B \rightarrow \mathrm{Hom}(E, F)}$
be a filter as in \cref{sec:xcorrFilter}.
Then we define the kernel
\begin{equation*}
  \kappa \colon \bigsqcup_{b \in B} G.b \rightarrow
  \mathrm{Hom}(E \times B, B \times F)
\end{equation*}
by
\begin{equation}
  \label{eq:projectionToKernel}
  \kappa(k.b, b)(v) \coloneqq
  \int_{G_b} \omega(kh, b)\big(h^{-1}k^{-1}.v\big) d\nu_b(h)
\end{equation}
for all ${b \in B}$, ${k \in G}$, and ${v \in E_{k.b}}$.
As the Borel measure
${\nu_b \colon \mathcal{B}(G_b) \rightarrow [0, \infty]}$
is left-invariant for any ${b \in B}$,
the kernel $\kappa$ is not overdetermined
by these assignments \eqref{eq:projectionToKernel}.

\begin{lemma}
  Let ${g \in G}$, ${b \in B}$, ${c \in G.b}$, and ${v \in E_c}$.
  Then we have the equation
  \begin{equation*}
    g.\kappa(c, b)(v) = \kappa(g.c, g.b)(g.v).
    \tag{\ref{eq:kappaConstraint} revisited}
  \end{equation*}
\end{lemma}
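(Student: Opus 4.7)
The plan is to expand the right-hand side using the definition of $\kappa$, perform a change of variables coming from $\nu_{g.b} = c_{g*}\nu_b$, and then apply the filter constraint \eqref{eq:omegaConstraint} to peel off the leading $g.$ from $\omega$, which will finally match the left-hand side.

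More concretely, I would first write $c = k.b$ for some $k \in G$, which is possible since $c \in G.b$. Observe that $g.c = (gkg^{-1}).(g.b)$, so the defining formula \eqref{eq:projectionToKernel} applied at $(g.c, g.b)$ with representative $gkg^{-1}$ gives
\[
  \kappa(g.c, g.b)(g.v)
  = \int_{G_{g.b}}
    \omega(gkg^{-1} h, g.b)\bigl(h^{-1} g k^{-1} g^{-1}.(g.v)\bigr)
    d\nu_{g.b}(h),
\]
and the innermost argument simplifies to $h^{-1} g k^{-1}.v$.

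Next I would use the pushforward identity $\nu_{g.b} = c_{g*}\nu_b$ to substitute $h = g h' g^{-1}$ with $h' \in G_b$. Under this substitution, $gkg^{-1} h = g(kh')g^{-1}$ and $h^{-1} g k^{-1}.v = g(h'^{-1}k^{-1}.v)$, with $h'^{-1}k^{-1}.v$ lying in $E_b$ (since $k^{-1}.v \in E_b$ and $h' \in G_b$ acts on this fiber). Thus
\[
  \kappa(g.c, g.b)(g.v)
  = \int_{G_b}
    \omega\bigl(g(kh')g^{-1}, g.b\bigr)\bigl(g.(h'^{-1}k^{-1}.v)\bigr)
    d\nu_b(h').
\]

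At this point the filter constraint \eqref{eq:omegaConstraint}, applied with group element $kh'$, base point $b$, and fiber vector $h'^{-1}k^{-1}.v \in E_b$, rewrites the integrand as $g.\omega(kh', b)(h'^{-1}k^{-1}.v)$. Pulling the linear action $g.$ out of the integral — which is fine because the action on each fiber is linear and the integral takes values in a single fiber — yields
\[
  \kappa(g.c, g.b)(g.v)
  = g.\int_{G_b} \omega(kh', b)\bigl(h'^{-1}k^{-1}.v\bigr) d\nu_b(h')
  = g.\kappa(k.b, b)(v)
  = g.\kappa(c, b)(v),
\]
completing the proof. The only real obstacle is bookkeeping: keeping track of which fiber each vector lives in so that the application of \eqref{eq:omegaConstraint} is legal, and noting that the independence of the formula \eqref{eq:projectionToKernel} from the choice of representative $k$ (guaranteed by left-invariance of $\nu_b$) is what lets us freely pick $gkg^{-1}$ as a representative for $g.c$ over the basepoint $g.b$.
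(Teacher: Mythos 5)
Your proposal is correct and is essentially the paper's own proof read in the opposite direction: the paper starts from $g.\kappa(c,b)(v)$, expands via \eqref{eq:projectionToKernel} with representative $k$, applies \eqref{eq:omegaConstraint}, and then uses $c_{g*}\nu_b = \nu_{g.b}$ to reassemble the right-hand side, whereas you start from $\kappa(g.c,g.b)(g.v)$ with representative $gkg^{-1}$ and run the same substitutions backwards. The bookkeeping, the use of left-invariance of $\nu_b$ for independence of the representative, and the application of the filter constraint all match the paper's argument.
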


\begin{proof}
  Let ${k \in G}$ with ${c = k.b}$.
  Then we have
  \begin{align*}
    g.\kappa(c, b)(v)
    &=
      g.\kappa(k.b, b)(v)
    \\
    &\!\!\stackrel{\eqref{eq:projectionToKernel}}{=}
      g.\int_{G_b} \omega(kh, b)\big(h^{-1}k^{-1}.v\big) d\nu_b(h)
    \\
    &=
      \int_{G_b} g.\omega(kh, b)\big(h^{-1}k^{-1}.v\big) d\nu_b(h)
    \\
    &\!\stackrel{\eqref{eq:omegaConstraint}}{=}
      \int_{G_b}
      \omega\big(gkhg^{-1}, g.b\big)\big(gh^{-1}k^{-1}.v\big)
      d\nu_b(h)
    \\
    &=
      \int_{G_b}
      \omega\big(gkg^{-1}ghg^{-1}, g.b\big)\big(gh^{-1}g^{-1}gk^{-1}.v\big)
      d\nu_{b}(h)
    \\
    &=
      \int_{G_b}
      \omega\big(gkg^{-1}h, g.b\big)\big(h^{-1}gk^{-1}.v\big)
      dc_{g*} \nu_{b}(h)
    \\
    &=
      \int_{G_b}
      \omega\big(gkg^{-1}h, g.b\big)\big(h^{-1}gk^{-1}g^{-1}g.v\big)
      d\nu_{g.b}(h)
    \\
    &\!\!\stackrel{\eqref{eq:projectionToKernel}}{=}
      \kappa\big(gkg^{-1}g.b, g.b\big)(g.v)
    \\[1ex]
    &=
      \kappa(gk.b, g.b)(g.v)
    \\[1ex]
    &=
      \kappa(g.c, g.b)(g.v).
      \qedhere
  \end{align*}
\end{proof}

We also note that,
as the map
${G \rightarrow B, \, k \mapsto k.b}$
is continuous
and as
${\omega(-, b) \colon G \rightarrow \mathrm{Hom}(E_b, F_b)}$
has compact support,
the support of the partially applied map
${\kappa(-, b) \colon G.b \rightarrow \mathrm{Hom}(E, F_b)}$
is compact as well for all ${b \in B}$.
  
\begin{theorem}
  For any continuous section ${f \in \Gamma(E)}$ and any ${b \in B}$
  we have
  \begin{equation*}
    T_{\kappa}(f)(b) = \big(\omega \star \tilde{f}\big)(e, b),
  \end{equation*}
  where
  ${\tilde{f} \colon G \times B \rightarrow E}$
  is the Mackey section associated to $f$
  in the sense of \cref{definition:mackeySection}.
\end{theorem}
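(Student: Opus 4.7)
The plan is to unwind both sides until they meet, with the three equations \eqref{eq:projectionToKernel}, \eqref{eq:fubini}, and the definition of the Mackey section doing essentially all the work.

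First, I would expand the left-hand side by the definition \eqref{eq:orbitwiseIntegralTrafo} and reparametrize the orbit $G.b$ by writing each $c \in G.b$ as $c = k.b$ for some (not necessarily unique) $k \in G$. This gives
\[
T_\kappa(f)(b) = \int_{G.b} \kappa(k.b, b)(f(k.b)) \, d\bar\mu_b(k.b).
\]
Inserting \eqref{eq:projectionToKernel} into the integrand yields a double integral of $\omega(kh, b)(h^{-1}k^{-1}.f(k.b))$ against $\nu_b$ and then $\bar\mu_b$. The key observation is that, because $h \in G_b$, we have $kh.b = k.b$, and therefore by the definition \eqref{eq:definitionMackeySection} of the Mackey section,
\[
\tilde f(kh, b) = (kh)^{-1}.f(kh.b) = h^{-1}k^{-1}.f(k.b).
\]
So the integrand rewrites cleanly as $\omega(kh, b)(\tilde f(kh, b))$, and the left-invariance of $\nu_b$ ensures this expression is independent of the particular choice of $k$ representing $c$.

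Next I would collapse the double integral into a single integral over $G$ using the Fubini-type equation \eqref{eq:fubini}, applied to the scalar-valued function $h \mapsto \alpha\bigl(\omega(h, b)(\tilde f(h, b))\bigr)$ for an arbitrary linear form $\alpha$ on $F_b$ (equivalently, argued component-wise in a local trivialization of $F$ near $b$). This is legitimate because $\omega(-, b)$ has compact support by assumption, so the integrand is a compactly supported continuous function of $h$. Applying \eqref{eq:fubini} and then removing $\alpha$ gives
\[
\int_{G.b} \int_{G_b} \omega(kh, b)(\tilde f(kh, b)) \, d\nu_b(h) \, d\bar\mu_b(k.b)
= \int_G \omega(h, b)(\tilde f(h, b)) \, d\mu_b(h),
\]
which, by \cref{definition:xcorr}, is exactly $(\omega \star \tilde f)(e, b)$.

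The main obstacle I anticipate is purely bookkeeping: making sure the parameter $k$ that appears implicitly in the integration $d\bar\mu_b(k.b)$ is handled correctly, i.e.\ that the integrand really does descend to a well-defined function of $c = k.b \in G.b$. This is where the left-invariance of $\nu_b$ is essential, and it is the reason the same $\nu_b$ used in \eqref{eq:projectionToKernel} must also be the one appearing in the Fubini identity \eqref{eq:fubini}. The remaining technical point -- applying a scalar Fubini identity to a fiber-valued integrand -- is handled by the linear-form trick mentioned above and does not require any additional assumption beyond what the setup already provides.
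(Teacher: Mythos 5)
Your proposal is correct and follows essentially the same chain of equalities as the paper's proof: expand $T_\kappa(f)(b)$, reparametrize the orbit, insert \eqref{eq:projectionToKernel}, use $kh.b = k.b$ for $h \in G_b$ to recognize the Mackey section $\tilde f(kh,b)$, and collapse the double integral via \eqref{eq:fubini}. Your extra remarks on well-definedness via left-invariance of $\nu_b$ and on reducing the fiber-valued Fubini step to the scalar case by linear forms are sound points that the paper leaves implicit.
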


\begin{proof}
  We have
  \begin{align*}
    T_{\kappa}(f)(b)
    &=
      \int_{G.b} \kappa(c, b)(f(c)) d\bar{\mu}_b(c)
    \\
    &=
      \int_{G.b} \kappa(k.b, b)(f(k.b)) d\bar{\mu}_b(k.b)
    \\
    &\!\!\stackrel{\eqref{eq:projectionToKernel}}{=}
      \int_{G.b}
      \int_{G_b} \omega(kh, b)\big(h^{-1}k^{-1}.f(k.b)\big) d\nu_b(h)
      d\bar{\mu}_b(k.b)
    \\
    &=
      \int_{G.b}
      \int_{G_b} \omega(kh, b)\big((kh)^{-1}.f(kh.b)\big) d\nu_b(h)
      d\bar{\mu}_b(k.b)
    \\
    &\!\stackrel{\eqref{eq:definitionMackeySection}}{=}
      \int_{G.b}
      \int_{G_b} \omega(kh, b)\big(\tilde{f}(kh, b)\big) d\nu_b(h)
      d\bar{\mu}_b(k.b)
    \\
    &=
      \int_{G}
      \omega(h, b)\big(\tilde{f}(h, b)\big) d\mu_b(h)
    \\
    &=
      \big(\omega \star \tilde{f}\big)(e, b).
      \qedhere
  \end{align*}
\end{proof}

\subsection{Lifting Kernels to Filters}
\label{sec:liftKernelToFilter}

We now provide a converse
to the construction of the previous \cref{sec:projectFilterToKernel}.
To this end,
suppose we have $G$-equivariant real vector bundles
${E \rightarrow B}$ and ${F \rightarrow B}$
and let
\[\kappa \colon
  \bigsqcup_{b \in B} G.b \rightarrow
  \mathrm{Hom}(E \times B, B \times F)\]
be a kernel as in \cref{sec:orbitwiseIntegralTrafos}.
As a partially applied cross-correlation
${\omega \star - \colon M(E) \rightarrow M(F)}$
is $G$-equivariant by \cref{thm:xcorrEquivariance}
for a filter $\omega$ satisfying the constraint
\eqref{eq:omegaConstraint},
the orbitwise integral transform
${T_{\kappa} \colon \Gamma(E) \rightarrow \Gamma'(F)}$
is necessarily $G$-equivariant
if it can be written in terms of such cross-correlations.
Moreover,
under the mild tameness assumptions
of \cref{thm:equivToConstraint},
the integral transform $T_{\kappa}$ is $G$-equivariant
iff its kernel $\kappa$ satisfies the constraint
\begin{equation*}
  g.\kappa(c, b)(v) = \kappa(g.c, g.b)(g.v)
  \tag{\ref{eq:kappaConstraint} revisited}
\end{equation*}
for all ${g \in G}$, ${b \in B}$, ${c \in G.b}$, and ${v \in E_c}$,
which we assume to be satisfied from this point forward.

As we have seen already with the example of \cref{sec:example},
lifting a kernel $\kappa$ to a filter $\omega$
depends on a choice of the following.
We need some closed subset
${R \subseteq \displaystyle{\bigsqcup_{b \in B}} G.b}$
that contains the support of ${\kappa}$ and
that is $G$-invariant with respect to the diagonal action
${G \curvearrowright B \times B}$.
Moreover,
we need to choose some continuous map
\[\theta \colon R \rightarrow G\]
subject to the constraint
\begin{equation*}
  c = \theta(c, b).b
  \tag{\ref{eq:thetaLift} revisited}
\end{equation*}
for all ${(c, b) \in R}$.
Now in order for our construction
to promote the given constraint \eqref{eq:kappaConstraint}
for the kernel $\kappa$
to the constraint \eqref{eq:omegaConstraint},
which we require from the filter $\omega$,
we need to impose an additional constraint on $\theta$.
More specifically,
for any ${g \in G}$ and ${(c, b) \in R}$
we impose the equation
\begin{equation}
  \label{eq:thetaConstraint}
  g \theta(c, b) = \theta(g.c, g.b) g.
\end{equation}
While we make no use of the so called
\emph{category of elements}
associated to the group action ${G \curvearrowright B}$
(as a set-valued functor on $G$),
it does provide the illustration
\begin{equation*}
  \begin{tikzcd}[ampersand replacement=\&,column sep=11ex,row sep=5ex]
    b
    \arrow[r, "{\theta(c, b)}"]
    \arrow[d, "g"']
    \&
    c
    \arrow[d, "g"]
    \\
    g.b
    \arrow[r, "{\theta(g.c, g.b)}"']
    \&
    g.c
  \end{tikzcd}
\end{equation*}
of this additional constraint \eqref{eq:thetaConstraint}.

\begin{remark}
  \label{remark:theta}
  We add some comments related to the map
  ${\theta \colon R \rightarrow G}$.
  \begin{enumerate}[(i)]
  \item
    The additional constraint \eqref{eq:thetaConstraint}
    is equivalent to the map
    ${\theta \colon R \rightarrow G}$
    being $G$-equivariant
    with respect to the diagonal action on
    ${R \subseteq B \times B}$
    and the action by conjugation on $G$.
  \item
    Let ${b \in B}$,
    ${S \coloneqq \{c \in G.b \mid (c, b) \in R\}}$,
    and let ${E |_S \coloneqq \bigcup_{c \in S} E_c}$
    be the restriction of $E$ to $S$.
    Then the partially applied map
    ${\theta(-, b) \colon S \rightarrow G}$
    provides the trivialization
    \begin{equation*}
      \begin{tikzcd}[ampersand replacement=\&,row sep=0.5ex]
        S \times E_b
        \arrow[r, "\cong"]
        \&
        E |_S
        \\
        (c, v)
        \arrow[r, mapsto]
        \&
        \theta(c, b).v
      \end{tikzcd}
    \end{equation*}
    of the restricted vector bundle
    ${E |_S \rightarrow S}$.
    As we require the support of the partially applied kernel
    ${\kappa(-, b)}$ to be contained in $S$,
    the present construction can only be applied
    when the vector bundle
    ${E \rightarrow B}$
    is trivializable above the receptive field
    of $b$.
    In order to circumvent this limitation,
    we generalize the present construction
    in \cref{sec:generalizationLargeReceptiveFields}.
  \end{enumerate}
\end{remark}

As we collected all the necessary ingredients,
we now define the filter
${\omega \colon G \times B \rightarrow \mathrm{Hom}(E, F)}$
by setting
\begin{equation}
  \label{eq:omegaDefinition}
  \omega(h, b)(v) \coloneqq
  \begin{cases}
    \delta\big(\theta(h.b, b)^{-1}h, b\big)\kappa(h.b, b)(h.v)
    & (h.b, b) \in R
    \\
    0
    & (h.b, b) \notin R
  \end{cases}
\end{equation}
for all ${h \in G}$, ${b \in B}$, and ${v \in E_b}$.

\begin{lemma}
  \label{thm:omegaCompactSupport}
  For ${b \in B}$
  the partially applied map
  ${\omega(-, b) \colon G \rightarrow \mathrm{Hom}(E_b, F_b)}$
  has compact support.
\end{lemma}

\begin{proof}
  Let ${b \in B}$,
  let
  ${S \coloneqq \{c \in G.b \mid (c, b) \in R\}}$,
  let
  \begin{equation*}
    \varphi \colon S \times G_b \rightarrow G,\,
    (c, h) \mapsto \theta(c, b)h,
  \end{equation*}
  and let
  \begin{equation*}
    C \coloneqq
    (\supp \kappa(-, b)) \times (\supp \delta(-, b)) \subseteq S \times G_b.
  \end{equation*}
  As $C$ is compact by our assumptions
  on the kernel $\kappa$ and the function $\delta$
  and as $\varphi$ is continuous,
  it suffices to show the inclusion
  ${\supp \omega(-, b) \subseteq \varphi(C)}$.
  To this end,
  let ${h \in \supp \omega(-, b)}$.
  Then we have
  ${(h.b, b) \in \supp \kappa(-, b)}$
  and
  ${k \coloneqq \theta(h.b, b)^{-1} h \in \supp \delta(-, b)}$.
  As a result,
  we have
  ${(h.b, k) \in C}$
  and
  \begin{equation*}
    h =
    \theta(h.b, b) \theta(h.b, b)^{-1} h =
    \theta(h.b, b)k =
    \varphi(h.b, k) \in \varphi(C).
    \qedhere
  \end{equation*}
\end{proof}

\begin{lemma}
  \label{thm:omegaConstraint}
  Let ${g, h \in G}$, ${b \in B}$, and ${v \in E_b}$.
  Then
  we have the equation
  \begin{equation*}
      \omega\big(ghg^{-1}, g.b\big)(g.v) = g.\omega(h, b)(v).
      \tag{\ref{eq:omegaConstraint} revisited}
  \end{equation*}
\end{lemma}

\begin{proof}
  As the subset
  $\displaystyle{
    R \subseteq \bigsqcup_{b' \in B} G.b'
  }$
  is assumed or chosen to be $G$-invariant
  with respect to the diagonal action
  ${G \curvearrowright B \times B}$,
  we have
  ${(h.b, b) \in R}$ iff we have
  ${\big(ghg^{-1}g.b, g.b\big) = (gh.b, g.b) \in R}$.
  Thus,
  if ${(h.b, b) \in R}$,
  then we obtain the equation
  \begin{align*}
    \omega\big(ghg^{-1}, g.b\big)(g.v)
    &\stackrel{\eqref{eq:omegaDefinition}}{=}
      \delta\big(\theta(gh.b, g.b)^{-1}ghg^{-1}, g.b\big)
      \kappa(gh.b, g.b)(gh.v)
    \\
    &\stackrel{\eqref{eq:kappaConstraint}}{=}
      \delta\big(\theta(gh.b, g.b)^{-1}ghg^{-1}, g.b\big)
      g.\kappa(h.b, b)(h.v)
    \\
    &\stackrel{\eqref{eq:thetaConstraint}}{=}
      \delta\big(g\theta(h.b, b)^{-1}hg^{-1}, g.b\big)
      g.\kappa(h.b, b)(h.v)
    \\
    &\stackrel{\eqref{eq:deltaConstraint}}{=}
      \delta\big(\theta(h.b, b)^{-1}h, b\big)
      g.\kappa(h.b, b)(h.v)      
    \\
    &~=\,
      g.\big(\delta\big(\theta(h.b, b)^{-1}h, b\big)\kappa(h.b, b)(h.v)\big)
    \\
    &\stackrel{\eqref{eq:omegaDefinition}}{=}
      g.\omega(h, b)(v)
  \end{align*}
  and if ${(h.b, b) \notin R}$,
  then we have
  \begin{equation*}
    \omega\big(ghg^{-1}, g.b\big)(g.v) = 0 = g.\omega(h, b)(v).
    \qedhere
  \end{equation*}
\end{proof}

In view of the preceding Lemmas \ref{thm:omegaCompactSupport}
and \ref{thm:omegaConstraint},
the map ${\omega \colon G \times B \rightarrow \mathrm{Hom}(E, F)}$
can now be used as a filter
in cross-correlations
in the sense of \cref{definition:xcorr}.

\begin{theorem}
  \label{thm:omegaIsLift}
  Let
  ${f \in \Gamma(E)}$
  be a continuous section
  and let
  \begin{equation*}
    \tilde{f} \colon G \times B \rightarrow E, \,
    (h, b) \mapsto h^{-1}.f(h.b)
  \end{equation*}
  be the corresponding Mackey section
  in the sense of \cref{definition:mackeySection}.
  Then we have
  \begin{equation*}
    \big(\omega \star \tilde{f}\big)(e, b)
    =
    T_{\kappa}(f)(b)
  \end{equation*}
  for any ${b \in B}$.
\end{theorem}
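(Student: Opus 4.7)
The plan is to unfold the left-hand side using \cref{definition:xcorr}, apply the Fubini-type relation \eqref{eq:fubini} to split the integration over $G$ into a double integral over the orbit $G.b$ and the stabilizer $G_b$, and then collapse the inner integral to $1$ using \eqref{eq:deltaNormed}. The concrete definition \eqref{eq:omegaDefinition} of $\omega$ is engineered so that the $\theta$-dependent factor disappears after left-translation in $\nu_b$, leaving precisely the kernel expression.

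In more detail, evaluating \eqref{eq:xcorr} at $h=e$ gives
\begin{equation*}
  \big(\omega \star \tilde{f}\big)(e,b)
  = \int_G \omega(k,b)\big(\tilde{f}(k,b)\big) \, d\mu_b(k),
\end{equation*}
and \eqref{eq:fubini} rewrites this as
\begin{equation*}
  \int_{G.b} \int_{G_b} \omega(kh,b)\big(\tilde{f}(kh,b)\big) \, d\nu_b(h) \, d\bar{\mu}_b(k.b).
\end{equation*}
Next I would use the key observation that, since $h \in G_b$ fixes $b$, the point $(kh).b$ equals $k.b$. Thus one can pull a fixed representative $k \in G$ with $c = k.b$ through the inner integral: the factor $\tilde{f}(kh,b) = (kh)^{-1}.f(k.b)$ combines with the $kh.v$ argument in \eqref{eq:omegaDefinition} to produce simply $\kappa(k.b,b)(f(k.b))$, which is independent of $h$ and factors out of the inner integral.

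What remains in the inner integral is the scalar
\begin{equation*}
  \int_{G_b} \delta\big(\theta(k.b,b)^{-1} kh, b\big) \, d\nu_b(h).
\end{equation*}
Here the essential point is that $\theta(k.b,b)^{-1} k \in G_b$ by \eqref{eq:thetaLift} (both $\theta(k.b,b)$ and $k$ send $b$ to $k.b$), so left-invariance of $\nu_b$ converts this to $\int_{G_b} \delta(h',b)\,d\nu_b(h') = 1$ by \eqref{eq:deltaNormed}. Folding everything back up yields
\begin{equation*}
  \int_{G.b} \kappa(k.b,b)(f(k.b)) \, d\bar{\mu}_b(k.b) = T_{\kappa}(f)(b),
\end{equation*}
as desired.

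The main obstacle I anticipate is bookkeeping the two cases of \eqref{eq:omegaDefinition}: one must verify that when $(k.b,b) \notin \supp \kappa$, both the $\omega$-integrand vanishes for every $h \in G_b$ (since $(kh).b = k.b$ lies outside $\supp\kappa$ as well) and the corresponding contribution to the kernel integral vanishes, so that restricting the $k.b$-integration to $\supp\kappa$ on both sides is consistent. Beyond this, the calculation is a linear chain of rewrites, and the fact that $\nu_b$ is left-invariant together with the normalization \eqref{eq:deltaNormed} ensures that the choice of $\theta$ drops out of the final value, even though it influences $\omega$ itself.
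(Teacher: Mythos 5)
Your proposal is correct and follows essentially the same route as the paper: unfold the cross-correlation at $h=e$, apply the Fubini relation \eqref{eq:fubini}, use \eqref{eq:omegaDefinition} so that the $kh$ and $(kh)^{-1}$ cancel to leave $\kappa(k.b,b)(f(k.b))$ times a $\delta$-factor, and collapse the inner integral to $1$ via \eqref{eq:deltaNormed}. The only cosmetic difference is that the paper fixes the representative $k=\theta(c,b)$ up front (invoking the representative-independence of the inner integral), whereas you keep a generic $k$ and invoke left-invariance of $\nu_b$ explicitly to shift by $\theta(k.b,b)^{-1}k \in G_b$ at the end; both rest on the same mechanism, and your handling of the support bookkeeping matches the paper's restriction to $S$ and $\tilde{S}$.
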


\begin{proof}
  Let ${b \in B}$,
  ${S \coloneqq \{c \in G.b \mid (c, b) \in R\}}$,
  and ${\tilde{S} \coloneqq \{h \in G \mid h.b \in S\}}$.
  For any ${h \in \tilde{S}}$ we have
  \begin{equation}
    \label{eq:hCanceling}
    \begin{split}
      \omega(h, b)\big(\tilde{f}(h, b)\big)
      &\stackrel{\eqref{eq:omegaDefinition}}{=}
        \delta\big(\theta(h.b, b)^{-1}h,b\big)
        \kappa(h.b, b)\big(hh^{-1}.f(h.b)\big)
      \\
      &~=
        \delta\big(\theta(h.b, b)^{-1}h,b\big)
        \kappa(h.b, b)(f(h.b)).      
    \end{split}
  \end{equation}
  Hence,
  for any ${c \in S}$ and ${h \in G_b}$ we obtain the equation
  \begin{equation}
    \label{eq:omega_delta}
    \begin{split}
      \omega(\theta(c, b)h, b)\big(\tilde{f}(\theta(c, b)h, b)\big)
      \!\!\!\!\!\!\!\!\!\!\!\!\!\!\!\!\!\!\!\!\!\!\!\!\!\!\!\!\!\!
      \!\!\!\!\!\!\!\!\!\!\!\!\!\!\!\!\!
      \\
      &\stackrel{\eqref{eq:hCanceling}}{=}
        \delta\big(\theta(\theta(c, b)h.b, b)^{-1} \theta(c, b)h, b\big)
        \kappa(\theta(c, b)h.b, b)(f(\theta(c, b)h.b))
      \\
      &\stackrel{\eqref{eq:thetaLift}}{=}
        \delta\big(\theta(c, b)^{-1} \theta(c, b)h, b\big)
        \kappa(c, b)(f(c))
      \\
      &~=\,
        \delta(h, b)
        \kappa(c, b)(f(c)).      
    \end{split}
  \end{equation}
  As an end result we obtain
  \begin{align*}
    \big(\omega \star \tilde{f}\big)(e, b)
    &~=\,
      \int_G \omega(h, b)\big(\tilde{f}(h, b)\big)d\mu_b(h)
    \\
    &~=\,
      \int_{\tilde{S}} \omega(h, b)\big(\tilde{f}(h, b)\big)d\mu_b(h)
    \\
    &\stackrel{\eqref{eq:fubini}}{=}
      \int_S \int_{G_b}
      \omega(kh, b)\big(\tilde{f}(kh, b)\big)
      d\nu_b(h) d\bar{\mu}_b(k.b)
    \\
    &\stackrel{\eqref{eq:thetaLift}}{=}
      \int_S \int_{G_b}
      \omega(\theta(c, b)h, b)\big(\tilde{f}(\theta(c, b)h, b)\big)
      d\nu_b(h) d\bar{\mu}_b(c)
    \\
    &\stackrel{\eqref{eq:omega_delta}}{=}
      \int_S \int_{G_b}
      \delta(h, b)
      \kappa(c, b)(f(c))
      d\nu_b(h) d\bar{\mu}_b(c)    
    \\
    &~=\,
      \int_S \int_{G_b}
      \delta(h, b)
      d\nu_b(h)
      \kappa(c, b)(f(c))
      d\bar{\mu}_b(c)    
    \\
    &\stackrel{\eqref{eq:deltaNormed}}{=}
      \int_S
      \kappa(c, b)(f(c))
      d\bar{\mu}_b(c)
    \\
    &~=\,
      T_{\kappa}(f)(b).
      \qedhere
  \end{align*}
\end{proof}

\begin{corollary}
  For any continuous section
  ${f \in \Gamma(E)}$
  the section
  ${T_{\kappa}(f) \in \Gamma'(F)}$
  is continuous as well
  and hence a posteriori a vector in $\Gamma(F)$.
\end{corollary}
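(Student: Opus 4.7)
The plan is a one-line reduction to \cref{thm:omegaIsLift}, combined with the observation that Mackey sections of $F$ correspond to \emph{continuous} sections of $F$. Indeed, \cref{thm:omegaIsLift} already supplies the pointwise identity $T_{\kappa}(f)(b) = (\omega \star \tilde{f})(e, b)$ for every $b \in B$, where $\omega$ is the filter constructed in \eqref{eq:omegaDefinition} from $\kappa$ and some choice of $\theta$, and $\tilde{f} \in M(E)$ is the Mackey section associated to $f$. Thus there is essentially nothing left to prove once the right-hand side is recognised as a continuous section of $F$.

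For that second step I would invoke \cref{thm:isMackeySection}, which asserts that $\omega \star \tilde{f}$ is a Mackey section of $F$. Via the natural isomorphism $M(F) \cong \Gamma(F)$ recorded in \cref{definition:mackeySection}, this Mackey section is the image of the continuous section ${(\omega \star \tilde{f})(e, -) \in \Gamma(F)}$ under the section-to-Mackey-section map. In particular, ${b \mapsto (\omega \star \tilde{f})(e, b)}$ is a continuous section of $F$. Combining with the pointwise identity above identifies the a priori merely set-theoretic $T_{\kappa}(f) \in \Gamma'(F)$ with this continuous section, so ${T_{\kappa}(f) \in \Gamma(F)}$, as claimed.

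The only delicate point is making sure the identification $M(F) \cong \Gamma(F)$ carries continuity with it, so that what one recovers from a Mackey section is actually in $\Gamma(F)$ and not in some larger space of possibly discontinuous sections. This is implicit in \cref{definition:mackeySection} (a lift $\tilde{f}$ as in diagram \eqref{eq:mackeyLift} is continuous by convention) and is compatible with the continuity assumption on the family $\{\mu_b\}_{b \in B}$ introduced earlier in the paper, which is what ultimately guarantees that cross-correlations as in \cref{definition:xcorr} take continuous inputs to continuous outputs. Since the rest of the argument is a bookkeeping exercise on already-proved statements, I do not expect any real obstacle.
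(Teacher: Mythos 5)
Your proposal is correct and matches the argument the paper intends (the corollary is left without an explicit proof, but the paper's earlier remark in \cref{sec:orbitwiseIntegralTrafos} that continuity of $T_{\kappa}(f)$ ``follows a posteriori'' once the transform is written as a cross-correlation is exactly your reduction): apply \cref{thm:omegaIsLift} to identify $T_{\kappa}(f)$ with $(\omega \star \tilde{f})(e, -)$, and use \cref{thm:isMackeySection} together with the continuity built into \cref{definition:mackeySection} and the continuity of the family $\{\mu_b\}_{b \in B}$ to conclude that this is a continuous section of $F$. Nothing is missing.
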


\begin{proof}
  Let ${f \in \Gamma(E)}$.
  By the preceding \cref{thm:omegaIsLift}
  we have
  ${T_{\kappa}(f) = \big(\omega \star \tilde{f}\big)(e, -)}$,
  which is continuous.
\end{proof}

In the case of a transitive action
with compact stabilizers
by a unimodular group $G$,
\citet{Aronsson2022-ca}
has shown that
any $G$-equivariant transformation of vector bundle sections
satisfying some additional tameness assumption
can be obtained from cross-correlations.
His construction is more abstract
than the above construction of $\omega$
from $\kappa$ and $\theta$,
and it is not clear whether the resulting filter
(there referred to as a kernel and denoted by $\kappa$)
satisfies any particular constraints
(such as bi-equivariance or a constraint similar to ours).

\subsubsection{Generalization to Large Receptive Fields}
\label{sec:generalizationLargeReceptiveFields}

The preceding construction
lifting the filter $\kappa$
to a filter $\omega$
depended on the choice of a
$G$-invariant subset
${R \subseteq \displaystyle{\bigsqcup_{b \in B}} G.b}$
supporting the kernel $\kappa$
and a continuous map
${\theta \colon R \rightarrow G}$
subject to the constraint
\begin{equation*}
  c = \theta(c, b).b
  \tag{\ref{eq:thetaLift} revisited}
\end{equation*}
for all ${(c, b) \in R}$,
which is also reasonable considering the example of \cref{sec:example}.
However,
in view of \cref{remark:theta}.(ii),
the existence of such a map
${\theta \colon R \rightarrow G}$
also entails that the vector bundle
${E \rightarrow B}$
is trivializable above the receptive field
of any ${b \in B}$.
For most applications,
this is likely no serious limitation,
as the concept of a convolutional neural network
is also informed by the idea
that the receptive field of each node
will be localized around this node.
So it is not a far stretch to assume
that the receptive field of each ${b \in B}$
will be sufficiently local
for ${E \rightarrow B}$ to be trivializable
above the receptive field of $b$.
With that said,
it is still a reasonable question,
whether the concept of a group convolutional layer
can be pushed beyond vector bundles
that are trivializable above each receptive field.
And indeed,
as with many other local constructions,
such can be done using a sufficiently fine partition of unity.

We now assume
there is a locally finite
partition of unity
\[\{\varphi_i \colon
  \textstyle{\bigsqcup_{b \in B}} G.b \rightarrow [0, 1]\}_{i \in I}
\]
on $\displaystyle{\bigsqcup_{b \in B} G.b}$
that is $G$-invariant
with respect to the diagonal action
${G \curvearrowright B \times B}$.
Moreover,
let ${U_i \coloneqq \varphi_i^{-1}(0, 1]}$
for ${i \in I}$
and suppose we have a family
\[\{\theta_i \colon \overline{U}_i \rightarrow G\}_{i \in I}\]
of continuous maps
subject to the constraints
\begin{align}
  \label{eq:thetaFamLift}
  c & = \theta_i(c, b).b
  \\
  \text{and} \quad
  \label{eq:thetaFamConstraint}
  g \theta_i(c, b) & = \theta_i(g.c, g.b) g
\end{align}
for all ${g \in G}$, ${i \in I}$, and ${(c, b) \in \overline{U}_i}$,
where $\overline{U}_i$ is the closure of $U_i$
in $\displaystyle{\bigsqcup_{b \in B} G.b}$.

Also note,
while our previous choice of a supporting region $R$
and a map ${\theta \colon R \rightarrow G}$
had to take the support of the kernel $\kappa$ into consideration,
the partition of unity
${\{\varphi_i\}_{i \in I}}$ and the family of maps
${\{\theta_i\}_{i \in I}}$
can be chosen independently from the kernel $\kappa$.

Now let ${i \in I}$ and
\begin{equation}
  \label{eq:localizedKernel}
  \kappa_i \colon \bigsqcup_{b \in B} G.b \rightarrow
  \mathrm{Hom}(E \times B, B \times F),\,
  (c, b) \mapsto
  \varphi_i (c, b) \kappa(c, b).
\end{equation}
%
%
%
As
${\varphi_i \colon
  \displaystyle{\bigsqcup_{b \in B} G.b} \rightarrow [0, 1]}$
is $G$-invariant
with respect to the diagonal action ${G \curvearrowright B \times B}$,
the constraint \eqref{eq:kappaConstraint}
on the kernel $\kappa$
is inherited by this new kernel $\kappa_i$,
i.e. we have
\begin{equation}
  \label{eq:kappaFamConstraint}
  g.\kappa_i(c, b)(v) = \kappa_i(g.c, g.b)(g.v)
\end{equation}
for all ${g \in G}$, ${b \in B}$, ${c \in G.b}$, and ${v \in E_c}$.
Thus,
we have the $G$-equivariant integral transform
\[T_i \coloneqq
  T_{\kappa_i} \colon \Gamma(E) \rightarrow \Gamma'(F), \,
  f \mapsto T_{\kappa_i}(f)\]
by \cref{thm:kappaConstraintImpliesEquivariance}.
As we have
${\supp \kappa_i \subseteq \overline{U}_i}$
by construction of $\kappa_i$
and as we have the map
${\theta_i \colon \overline{U}_i \rightarrow G}$
as well as the constraints
\eqref{eq:kappaFamConstraint},
\eqref{eq:thetaFamLift}, and
\eqref{eq:thetaFamConstraint},
we are now back in the special case,
where we can lift the kernel $\kappa_i$
to a filter
${\omega_i \colon G \times B \rightarrow \mathrm{Hom}(E, F)}$
by setting
\begin{equation}
  \label{eq:omegaFamDefinition}
  \omega_i(h, b)(v) \coloneqq
  \begin{cases}
    \delta\big(\theta_i(h.b, b)^{-1}h, b\big)\kappa_i(h.b, b)(h.v)
    & (h.b, b) \in \overline{U}_i
    \\
    0
    & (h.b, b) \notin \overline{U}_i
  \end{cases}
\end{equation}
for all ${h \in G}$, ${b \in B}$, and ${v \in E_b}$
in close analogy to \eqref{eq:omegaDefinition}.

\begin{lemma}
  \label{thm:omegaFamCompactSupport}
  For ${i \in I}$ and ${b \in B}$
  the partially applied map
  ${\omega_i(-, b) \colon G \rightarrow \mathrm{Hom}(E_b, F_b)}$
  has compact support.
\end{lemma}

\begin{lemma}
  \label{thm:omegaFamConstraint}
  Let ${i \in I}$, ${g, h \in G}$, ${b \in B}$, and ${v \in E_b}$.
  Then
  we have the equation
  \begin{equation}
    \label{eq:omegaFamConstraint}
      \omega_i\big(ghg^{-1}, g.b\big)(g.v) = g.\omega_i(h, b)(v).
  \end{equation}
\end{lemma}

\begin{proof}[Proof of Lemmas
  \ref{thm:omegaFamCompactSupport} and
  \ref{thm:omegaFamConstraint}]
  These are Lemmas \ref{thm:omegaCompactSupport}
  and \ref{thm:omegaConstraint}
  now just reformulated for the herein defined filter
  ${\omega_i \colon G \times B \rightarrow \mathrm{Hom}(E, F)}$.  
\end{proof}

In view of the preceding Lemmas \ref{thm:omegaFamCompactSupport}
and \ref{thm:omegaFamConstraint},
the map ${\omega_i \colon G \times B \rightarrow \mathrm{Hom}(E, F)}$
can now be used as a filter
in cross-correlations
in the sense of \cref{definition:xcorr}
for each ${i \in I}$.

\begin{theorem}
  \label{thm:omegaFamIsLift}
  Let
  ${i \in I}$,
  let
  ${f \in \Gamma(E)}$
  be a continuous section,
  and let
  \begin{equation*}
    \tilde{f} \colon G \times B \rightarrow E, \,
    (h, b) \mapsto h^{-1}.f(h.b)
  \end{equation*}
  be the corresponding Mackey section
  in the sense of \cref{definition:mackeySection}.
  Then we have
  \begin{equation}
    \label{eq:omegaFamIsLift}
    \big(\omega_i \star \tilde{f}\big)(e, b)
    =
    T_{i}(f)(b)
  \end{equation}
  for any ${b \in B}$.
\end{theorem}

\begin{proof}
  This is \cref{thm:omegaIsLift}
  now just reformulated for the kernel $\kappa_i$,
  the integral transform
  ${T_i =
    T_{\kappa_i} \colon \Gamma(E) \rightarrow \Gamma'(F)}$,
  and the filter
  ${\omega_i \colon G \times B \rightarrow \mathrm{Hom}(E, F)}$.  
\end{proof}

With the preceding \cref{thm:omegaFamIsLift}
we now managed to lift each localized kernel $\kappa_i$
to a filter $\omega_i$
for ${i \in I}$.
In order to lift the kernel $\kappa$ itself,
we intend to use the sum of the family of filters
${\{\omega_i\}_{i \in I}}$:
\begin{equation}
  \label{eq:omegaSum}
  \omega \colon
  G \times B \rightarrow \mathrm{Hom}(E, F),\,
  (h, b) \mapsto
  \sum_{i \in I} \omega_i(h, b).
\end{equation}
Note however,
as the indexing set $I$ may be infinite,
the sum \eqref{eq:omegaSum} may not be well-defined a priori.
In order to prove that \eqref{eq:omegaSum} is well-defined
and also in order to show that it does indeed
lift the kernel $\kappa$,
we now use the locally finite open cover
${\{U_i\}_{i \in I}}$ of
$\displaystyle{\bigsqcup_{b \in B} G.b}$,
to obtain a locally finite open cover
of the orbit ${G.b}$ for each ${b \in B}$ as follows.
Let
\begin{equation*}
  U^b_i \coloneqq
  \{c \in G.b \mid (c, b) \in U_i\}
\end{equation*}
for each ${i \in I}$ and ${b \in B}$.
Then the family
${\{U^b_i\}_{i \in I}}$
is a locally finite open cover of $G.b$
for each ${b \in B}$.

\begin{lemma}
  \label{thm:finiteCoverOnOrbit}
  For each ${b \in B}$
  the set
  \[I_b \coloneqq
    \{i \in I \mid U^b_i \cap \supp \kappa(-, b) \neq \emptyset\}\]
  is finite.
\end{lemma}

\begin{proof}
  This follows from \cref{thm:locallyFiniteCoverCompact}
  and our assumption that the partially applied kernel
  ${\kappa(-, b) \colon G.b \rightarrow \mathrm{Hom}(E, F_b)}$
  is compactly supported.
\end{proof}

Now let ${b \in B}$.
For ${i \in I}$ the partially applied filter
${\omega_i(-, b)}$ 
vanishes iff the partially applied kernel
${\kappa_i(-, b) \colon G.b \rightarrow \mathrm{Hom}(E, F_b)}$
vanishes.
Thus,
by the preceding \cref{thm:finiteCoverOnOrbit}
there are only finitely many ${i \in I}$
with
${\omega_i(-, b) \colon G \rightarrow \mathrm{Hom}(E, F)}$
non-vanishing.
As a result
\begin{equation*}
  \omega(-, b) = \sum_{i \in I} \omega_i(-, b) \colon
  G \rightarrow \mathrm{Hom}(E, F)
\end{equation*}
is a finite sum of compactly supported maps
${G \rightarrow \mathrm{Hom}(E, F)}$,
which implies the following.

\begin{lemma}
  \label{thm:omegaSumCompactSupport}
  For ${b \in B}$
  the partially applied map
  ${\omega(-, b) \colon G \rightarrow \mathrm{Hom}(E_b, F_b)}$
  has compact support.
\end{lemma}

\begin{lemma}
  \label{thm:omegaSumConstraint}
  Let ${g, h \in G}$, ${b \in B}$, and ${v \in E_b}$.
  Then
  we have the equation
  \begin{equation*}
      \omega\big(ghg^{-1}, g.b\big)(g.v) = g.\omega(h, b)(v).
      \tag{\ref{eq:omegaConstraint} revisited}
  \end{equation*}
\end{lemma}

\begin{proof}
  We have
  \begin{align*}
    \omega\big(ghg^{-1}, g.b\big)(g.v)
    & \stackrel{\eqref{eq:omegaSum}}{=}
      \sum_{i \in I} \omega_i\big(ghg^{-1}, g.b\big)(g.v)
    \\
    & \stackrel{\eqref{eq:omegaFamConstraint}}{=}
      \sum_{i \in I} g.\omega_i(h, b)(v)
    \\
    & ~=
      g.\big(\textstyle{\sum_{i \in I}} \omega_i(h, b)(v)\big)
    \\
    & \stackrel{\eqref{eq:omegaSum}}{=}
      g.\omega(h, b)(v).
      \qedhere
  \end{align*}
\end{proof}

In view of the preceding Lemmas \ref{thm:omegaSumCompactSupport}
and \ref{thm:omegaSumConstraint},
the map ${\omega \colon G \times B \rightarrow \mathrm{Hom}(E, F)}$
can now be used as a filter
in cross-correlations
in the sense of \cref{definition:xcorr}.

\begin{theorem}
  \label{thm:omegaSumIsLift}
  Let
  ${f \in \Gamma(E)}$
  be a continuous section
  and let
  \begin{equation*}
    \tilde{f} \colon G \times B \rightarrow E, \,
    (h, b) \mapsto h^{-1}.f(h.b)
  \end{equation*}
  be the corresponding Mackey section
  in the sense of \cref{definition:mackeySection}.
  Then we have
  \begin{equation*}
    \big(\omega \star \tilde{f}\big)(e, b)
    =
    T_{\kappa}(f)(b)
  \end{equation*}
  for any ${b \in B}$.
\end{theorem}

\begin{proof}
  Let ${b \in B}$.
  Then we have
  \begin{align*}
    \big(\omega \star \tilde{f}\big)(e, b)
    & \stackrel{\eqref{eq:xcorr}}{=}
      \int_G \omega(h, b)\big(\tilde{f}(h, b)\big) d\mu_b(h)
    \\
    & \stackrel{\eqref{eq:omegaSum}}{=}
      \int_G \sum_{i \in I} \omega_i(h, b)\big(\tilde{f}(h, b)\big) d\mu_b(h)
    \\
    & ~=
      \sum_{i \in I} 
      \int_G \omega_i(h, b)\big(\tilde{f}(h, b)\big) d\mu_b(h)
    \\
    & \stackrel{\eqref{eq:xcorr}}{=}
      \sum_{i \in I}
      \big(\omega_i \star \tilde{f}\big)(e, b)
    \\
    & \stackrel{\eqref{eq:omegaFamIsLift}}{=}
      \sum_{i \in I}
      T_{i}(f)(b)
    \\
    & \stackrel{\eqref{eq:orbitwiseIntegralTrafo}}{=}
      \sum_{i \in I}
      \int_{G.b} \kappa_i(c, b)(f(c)) d\bar{\mu}_b(c)
    \\
    & ~=
      \int_{G.b}
      \sum_{i \in I}
      \kappa_i(c, b)(f(c)) d\bar{\mu}_b(c)
    \\
    & \stackrel{\eqref{eq:localizedKernel}}{=}
      \int_{G.b}
      \sum_{i \in I}
      \varphi_i (c, b) \kappa(c, b)(f(c)) d\bar{\mu}_b(c)
    \\
    & ~=
      \int_{G.b}
      \kappa(c, b)(f(c)) d\bar{\mu}_b(c)
    \\
    & \stackrel{\eqref{eq:orbitwiseIntegralTrafo}}{=}
      T_{\kappa}(f)(b).
  \end{align*}
  Here the second but last equality follows from the family
  ${\{\varphi_i\}_{i \in I}}$ being a partition of unity.
\end{proof}



\section{Other Integral Transforms}
\label{sec:otherIntegralTrafos}

In the previous \cref{sec:integralTrafosAsXCorr}
we related cross-correlations
to orbitwise integral transforms.
Yet,
in the literature more general integral transforms
are considered as layers of group convolutional neural networks as well.
In this \cref{sec:otherIntegralTrafos}
we discuss the trade-offs between
group cross correlations in the sense of \cref{definition:xcorr}
and more general integral transforms.

To this end,
let $B$ and $C$ be Hausdorff $G$-spaces
with $B$ paracompact
and let
${E \rightarrow B}$ and ${F \rightarrow C}$
be $G$-equivariant vector bundles.
Moreover,
suppose we have a continuous compactly supported section
${\kappa \in \Gamma_c(\mathrm{Hom}(E \times C, B \times F))}$
of the homomorphism bundle
${\mathrm{Hom}(E \times C, B \times F) \rightarrow B \times C}$
as well as a $G$-invariant locally finite strictly positive Borel measure
${\bar{\mu} \colon \mathcal{B}(B) \rightarrow [0, \infty]}$.
By reasoning similar to \cref{sec:equivIntegralTrafo},
we obtain that the associated integral transform
\begin{equation*}
  T_{\kappa} \colon \Gamma(E) \rightarrow \Gamma(F),\,
  f \mapsto T_{\kappa}(f),
\end{equation*}
where
\begin{equation*}
  T_{\kappa}(f) \colon C \rightarrow F,\,
  c \mapsto \int_B \kappa(b, c)(f(b)) d\bar{\mu}(b),
\end{equation*}
is $G$-equivariant
iff we have
\begin{equation}
  \label{eq:kappaConstraintMod}
  g.\kappa(b, c)(v) = \kappa(g.b, g.c)(g.v)
\end{equation}
for all ${g \in G}$, ${b \in B}$, ${c \in C}$, and ${v \in E_b}$;
see also \citep[Section 4.2]{Gerken2023-dm}.

For similar reasons as provided in \cref{sec:contraintsInvariances},
the kernel $\kappa$ is fully determined
by the family of partially applied maps
\begin{equation*}
  \{\kappa(-, c) \colon B \rightarrow \mathrm{Hom}(E, F_c)\}_{c \in D}
\end{equation*}
for some fundamental domain ${D \subseteq C}$
(and similarly for a fundamental domain of $B$).
For each ${c \in D}$ the partially applied map
${\kappa(-, c) \colon B \rightarrow \mathrm{Hom}(E, F_c)}$
is then subject to the constraint
\begin{equation*}
  g.\kappa(b, c)(v) = \kappa(g.b, c)(g.v)
\end{equation*}
for all ${g \in G_c}$, ${b \in B}$, and ${v \in E_b}$.
Now in some literature
(in particular when ${G \curvearrowright C}$ is transitive
so a single point suffices for a fundamental domain)
an integral transform is also considered a \emph{convolution}
when expressed in terms of such partially applied maps;
see for example \citep[Section 4.3]{Gerken2023-dm}.

There is a fundamental difference however.
The partially applied map
${\kappa(-, c) \colon B \rightarrow \mathrm{Hom}(E, F_c)}$
for each ${c \in D}$
still is a section to the potentially twisted vector bundle
${\mathrm{Hom}(E, F_c) \rightarrow B}$.
Whereas in the setting of \cref{sec:xcorrFilter},
the partially applied map
${\omega(-, b) \colon G \rightarrow
  \mathrm{Hom}(E_b, F_b)}$
for some ${b \in B}$
is just a vector-valued function.
This is also illustrated by the diagram \eqref{eq:omegaLift}
where $\omega$ is pictured as a lift to a vector bundle over $B$,
while $\kappa$ is a section to a vector bundle over the product
${B \times C}$.
So in the generality considered in this paper,
there are equivariant integral transforms
that cannot be expressed in terms of cross correlations
with the same benefits as \cref{definition:xcorr}.


\section*{Funding}

This research has been supported by EPSRC grant EP/Y028872/1,
Mathematical Foundations of Intelligence:
An \enquote{Erlangen Programme} for AI.

\bibliography{bib/aronsson--2022.bib,bib/bronstein--et-al--2021.bib,bib/acm_3454287.3455107.bib,bib/pmlr-v48-cohenc16.bib,bib/pmlr-v80-kondor18a.bib,bib/tornier--2020.bib,bib/hatcher--2003.bib,bib/gerken--et-al--2023.bib}
\nocite{2021arXiv210413478B}
\bibliographystyle{authordate1}

\appendix


\section{Constructing Families of Measures}
\label{sec:famMeas}

In this Appendix \ref{sec:famMeas} we show
how one can construct families of measures
as used in this paper
in a natural way
leaving as little to choice as possible.
We start by providing the following notion.

\begin{definition}
  \label{definition:freeGspace}
  We say that a $G$-space $X$ is \emph{free},
  if there is a topological space $M$
  and a $G$-equivariant homeomorphism
  \begin{equation*}
    X \cong M \times G.
  \end{equation*}
\end{definition}

\begin{remark}
  Note that the $G$-action associated to any free $G$-space
  is necessarily fixed-point free.
  However,
  there are non-free $G$-spaces with fixed-point free actions
  as for example $\R$ as a $\Z$-space under addition.
\end{remark}

\subsection{Families of Haar Measures}
\label{sec:famHaarMeas}

In order to provide a natural construction
of a family of measures
\[\{\mu_b \colon \mathcal{B}(G) \rightarrow [0, \infty]\}_{b \in B},\]
we impose the following.

\begin{assumption}
  \label{assumption:HaarMeas}
  We assume that $G$ is locally compact
  and that for any ${b \in B}$ we have
  ${G_b \subseteq N}$ for the corresponding stabilizer,
  where ${N \coloneqq \ker \Delta = \Delta^{-1}(1)}$
  is the kernel of the modular function
  ${\Delta \colon G \rightarrow (0, \infty)}$,
  see for example \mbox{\citep[Section 3]{2020arXiv200610956T}}.
  Moreover,
  we assume that
  ${N \backslash B}$
  has a ${G / N}$-invariant
  locally finite partition of unity
  such that each induced open is a free ${G / N}$-space
  in the sense of \cref{definition:freeGspace}.
\end{assumption}

\begin{examples}
  We provide the following two cases,
  where the \cref{assumption:HaarMeas}
  is satisfied.
  \begin{enumerate}[(i)]
  \item
    If $G$ is unimodular,
    then ${N = \Delta^{-1}(1) = G}$ and
    \begin{equation*}
      N \backslash B = G \backslash B \cong
      G \backslash B \times G / G.
    \end{equation*}
    So we may choose the single constant function
    \begin{equation*}
      \varphi \colon G \backslash B \rightarrow [0, 1], \,
      G.b \mapsto 1
    \end{equation*}
    for a partition of unity $\{\varphi\}$.
  \item
    If $G$ acts transitively on $B$,
    then ${G / N}$ acts freely and transitively on ${N \backslash B}$.
    So any choice of an $N$-orbit $N.b$ yields an isomorphism
    \begin{equation*}
      G / N \rightarrow N \backslash B, \,
      gN = Ng \mapsto Ng.b.
    \end{equation*}
    Thus,
    we may again choose the constant function
    \begin{equation*}
      \varphi \colon N \backslash B \rightarrow [0, 1], \,
      N.b \mapsto 1
    \end{equation*}
    for a partition of unity $\{\varphi\}$.
  \end{enumerate}
\end{examples}

\begin{lemma}
  Under the \cref{assumption:HaarMeas}
  there is a continuous function
  ${\lambda \colon B \rightarrow (0, \infty)}$
  such that
  ${\lambda(g.b) = \Delta(g) \lambda(b)}$
  for all ${g \in G}$ and ${b \in B}$.
\end{lemma}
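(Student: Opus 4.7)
The plan is to first reduce the problem to the quotient $N \backslash B$, then build the required function locally on each piece of the given cover, and finally glue via the partition of unity.

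Because $G_b \subseteq N$ for every $b \in B$ and $\Delta \equiv 1$ on $N$, any $\lambda$ with $\lambda(g.b) = \Delta(g)\lambda(b)$ is automatically $N$-invariant, hence descends along the quotient $B \to N \backslash B$. Since $N$ is normal in $G$ as the kernel of a continuous homomorphism, the action of $G$ on $B$ descends to a continuous action of $G/N$ on $N \backslash B$, and $\Delta$ descends to a continuous, injective group homomorphism $\bar\Delta \colon G/N \to (0,\infty)$. Hence it suffices to produce a continuous $\bar\lambda \colon N \backslash B \to (0,\infty)$ satisfying
\[
  \bar\lambda(\bar g \cdot x) \;=\; \bar\Delta(\bar g)\,\bar\lambda(x)
\]
for all $\bar g \in G/N$ and $x \in N \backslash B$; pulling $\bar\lambda$ back to $B$ then yields $\lambda$.

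To build $\bar\lambda$, I would use the $G/N$-invariant locally finite partition of unity $\{\varphi_i\}_{i \in I}$ provided by \cref{assumption:HaarMeas}. On each $G/N$-invariant open $U_i$ associated to $\varphi_i$, \cref{assumption:HaarMeas} supplies a $G/N$-equivariant homeomorphism $U_i \cong M_i \times G/N$. I define $\bar\lambda_i \colon U_i \to (0,\infty)$ as the projection $U_i \to G/N$ followed by $\bar\Delta$. A one-line check using the homomorphism property of $\bar\Delta$ gives $\bar\lambda_i(\bar g \cdot u) = \bar\Delta(\bar g)\,\bar\lambda_i(u)$ on $U_i$.

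I would then set $\bar\lambda \coloneqq \sum_{i \in I} \varphi_i\,\bar\lambda_i$, the sum being locally finite. Local finiteness combined with continuity of each $\varphi_i$ and each $\bar\lambda_i$ gives continuity of $\bar\lambda$; positivity of $\bar\Delta$ and the identity $\sum_i \varphi_i \equiv 1$ force $\bar\lambda > 0$; and the equivariance reduces termwise to the $G/N$-invariance of each $\varphi_i$ together with the already-verified equivariance of each $\bar\lambda_i$. The main technical content is organisational rather than conceptual, since the substantive hypotheses have been packaged into \cref{assumption:HaarMeas}. The only subtlety worth flagging is that each $\bar\lambda_i$ depends on the chosen trivialisation of $U_i$, but this ambiguity is harmless because any two equivariant choices on $U_i$ differ by a $G/N$-invariant positive continuous factor that can be absorbed into $\varphi_i$; this is precisely why multiplying by the invariant $\varphi_i$ before summing preserves equivariance.
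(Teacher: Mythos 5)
Your proposal is correct and follows essentially the same route as the paper: descend to ${N \backslash B}$, define local solutions on each free piece ${\varphi_i^{-1}(0, 1] \cong U_i \times G/N}$ using the modular function composed with the projection to ${G/N}$, and glue with the ${G/N}$-invariant locally finite partition of unity. The only difference is that the paper first transports the problem to an additive one via $\log \Delta$ and $\exp$ before forming the convex combination ${\sum_i \varphi_i \bar{\lambda}_i}$, whereas you glue the multiplicative local solutions directly --- which is equally valid here, since the factor $\bar{\Delta}(\bar{g})$ is independent of $i$ and of the point and therefore pulls out of the sum.
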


\begin{proof}
  \sloppy
  It suffices to provide a continuous function
  ${\bar{\lambda} \colon N \backslash B \rightarrow \R}$
  such that
  \begin{equation*}
    \bar{\lambda}(Ng.b) =
    \log \Delta(g) + \bar{\lambda}(N.b)
  \end{equation*}
  for all ${g \in G}$ and ${b \in B}$
  as is easily seen considering the diagram
  \begin{equation*}
    \begin{tikzcd}[ampersand replacement=\&,row sep=5ex, column sep=8ex]
      B
      \arrow[r, "\lambda"]
      \arrow[d, two heads]
      \&
      (0, \infty)
      \\
      N \backslash B
      \arrow[r, "\bar{\lambda}"']
      \&
      \R. \!
      \arrow[u, "\exp"']
    \end{tikzcd}
  \end{equation*}
  To this end,
  let
  \begin{equation*}
    \{\varphi_i \colon N \backslash B \rightarrow [0, 1]\}_{i \in I}
  \end{equation*}
  be a locally finite
  ${G / N}$-invariant partition of unity
  such that
  ${\varphi_i^{-1}(0, 1]}$
  is a free ${G / N}$-space for all ${i \in I}$.
  Moreover,
  we choose ${G / N}$-equivariant homeomorphisms
  \begin{equation}
    \label{eq:presentationPartUnity_lambda}
    \varphi_i^{-1}(0, 1] \cong U_i \times G / N
  \end{equation}
  for ${i \in I}$.
  By precomposing each of the functions
  \begin{equation*}
    U_i \times G / N \rightarrow \R, \,
    (p, gN) \mapsto \log \Delta(g)
    \quad
    \text{where ${i \in I}$}
  \end{equation*}
  with the corresponding homeomorphism
  from \eqref{eq:presentationPartUnity_lambda}
  we obtain functions
  ${\bar{\lambda}_i \colon
    \varphi_i^{-1}(0, 1] \rightarrow \R}$
  such that
  \begin{equation}
    \label{eq:localCoeffAdditive}
    \bar{\lambda}_i (gN.b) =
    \log \Delta(g) + \bar{\lambda}_i (N.b)
  \end{equation}
  for all ${i \in I}$, ${g \in G}$,
  and ${b \in \bigcup_{i \in I} \varphi_i^{-1}(0, 1]}$.
  Then we define
  \begin{equation*}
    \bar{\lambda} \colon
    N \backslash B \rightarrow \R, \,
    p \mapsto
    \sum_{i \in I} \varphi_i(p) \bar{\lambda}_i(p).
  \end{equation*}
  As ${\varphi_i(p) = 0}$ for ${i \in I}$ and
  ${p \in (N \backslash B) \setminus \varphi_i^{-1}(0, 1]}$
  and as the partition of unity $\{\varphi_i\}_{i \in I}$
  is locally finite,
  the function ${\bar{\lambda}}$ is well-defined and continuous.

  Now let ${g \in G}$ and ${b \in B}$.
  Then we have
  \begin{align*}
    \bar{\lambda}(Ng.b)
    &=
      \bar{\lambda}(gN.b)
    \\[1ex]
    &=
      \sum_{i \in I} \varphi_i (gN.b) \bar{\lambda}_i(gN.b)
    \\[1ex]
    &=
      \sum_{i \in I} \varphi_i (N.b) \bar{\lambda}_i(gN.b)
    \\[1ex]
    &=
      \sum_{\mathclap{\substack{i \in I \\ \varphi_i(N.b) \neq 0}}}
      \varphi_i (N.b) \bar{\lambda}_i(gN.b)
    \\[1ex]
    &\!\!\stackrel{\eqref{eq:localCoeffAdditive}}{=}
      \sum_{\mathclap{\substack{i \in I \\ \varphi_i(N.b) \neq 0}}}
      \varphi_i (N.b)
      \big(
      \log \Delta(g) + \bar{\lambda}_i (N.b)
      \big) 
    \\[1ex]
    &=
      \log \Delta(g) + 
      \sum_{\mathclap{\substack{i \in I \\ \varphi_i(N.b) \neq 0}}}
      \varphi_i (N.b)
      \bar{\lambda}_i (N.b)
    \\[1ex]
    &=
      \log \Delta(g) + 
      \sum_{i \in I}
      \varphi_i (N.b)
      \bar{\lambda}_i (N.b)
    \\[1ex]
    &=
      \log \Delta(g) +
      \bar{\lambda} (N.b).
  \end{align*}
  Here the third equality follows from the ${G / N}$-invariance
  of the partition of unity
  $\{\varphi_i\}_{i \in I}$
  and the sixth equality from
  \begin{equation*}
    \sum_{\mathclap{\substack{i \in I \\ \varphi_i(N.b) \neq 0}}}
    \varphi_i (N.b)
    =
    1.
    \qedhere
  \end{equation*} 
\end{proof}

\begin{theorem}
  \label{thm:existenceFamHaarMeas}  
  Under the \cref{assumption:HaarMeas}
  there is a continuous family
  \[\{\mu_b \colon \mathcal{B}(G) \rightarrow [0, \infty]\}_{b \in B}\]
  of Haar measures on $G$ such that
  \begin{equation*}
    \mu_{g.b} = c_{g*} \mu_b
    \tag{\ref{eq:muInv} revisited}
  \end{equation*}
  for all ${g \in G}$ and ${b \in B}$,
  where
  ${c_{g*} \mu_b}$ is the pushforward measure
  of $\mu_b$ along the conjugation
  \begin{equation*}
    c_g \colon G \rightarrow G, \,
    h \mapsto g h g^{-1}.
  \end{equation*}
\end{theorem}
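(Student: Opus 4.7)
The plan is to define the family by $\mu_b \coloneqq \lambda(b)\mu_0$, where $\mu_0$ is any fixed left Haar measure on $G$ (which exists by local compactness, as part of \cref{assumption:HaarMeas}) and $\lambda \colon B \rightarrow (0, \infty)$ is the continuous function produced by the preceding lemma. Because $\lambda(b) > 0$ for every $b \in B$, each $\mu_b$ is a positive scalar multiple of a left Haar measure and is therefore itself a left Haar measure on $G$, establishing the \enquote{of Haar measures} clause.

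For the compatibility $\mu_{g.b} = c_{g*}\mu_b$, I would argue as follows. For any Borel set $A \subseteq G$ and any $g \in G$, left-invariance of $\mu_0$ together with the defining relation of the modular function gives
\begin{equation*}
  (c_{g*}\mu_0)(A)
  = \mu_0\big(g^{-1}Ag\big)
  = \mu_0(Ag)
  = \Delta(g)\mu_0(A),
\end{equation*}
so that $c_{g*}\mu_0 = \Delta(g)\mu_0$. Combining this with the transformation law $\lambda(g.b) = \Delta(g)\lambda(b)$ from the preceding lemma yields
\begin{equation*}
  c_{g*}\mu_b
  = \lambda(b) c_{g*}\mu_0
  = \Delta(g)\lambda(b)\mu_0
  = \lambda(g.b)\mu_0
  = \mu_{g.b},
\end{equation*}
as required. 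Observe that the hypothesis $G_b \subseteq N$ from \cref{assumption:HaarMeas} is precisely the consistency condition this construction needs: if $g \in G_b$, then $g.b = b$ forces $\Delta(g) = 1$, which is guaranteed exactly when $G_b \subseteq \ker \Delta = N$.

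For continuity of the family in the sense introduced in \cref{sec:xcorr}, note that for any compactly supported continuous $f' \colon G \rightarrow \R$,
\begin{equation*}
  b \mapsto \int_G f'(h)d\mu_b(h) = \lambda(b)\int_G f'(h)d\mu_0(h)
\end{equation*}
is the product of the continuous function $\lambda$ with a (finite) constant, and therefore continuous.

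The genuinely delicate step, namely the construction of $\lambda$ out of the $(G/N)$-invariant partition of unity, has already been executed in the preceding lemma. After that, each of the three required properties reduces to a short verification using the standard calculus of the modular function, so I do not anticipate any real obstacle at this final stage.
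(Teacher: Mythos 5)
Your proposal is correct and follows essentially the same route as the paper: both define $\mu_b \coloneqq \lambda(b)\mu_0$ for a fixed Haar measure $\mu_0$ and the function $\lambda$ from the preceding lemma, and both verify $\mu_{g.b} = c_{g*}\mu_b$ via the same chain of equalities combining left-invariance, the modular function identity $\mu_0(Ag) = \Delta(g)\mu_0(A)$, and the transformation law $\lambda(g.b) = \Delta(g)\lambda(b)$, with continuity following from the factorization $\int_G f'\,d\mu_b = \lambda(b)\int_G f'\,d\mu_0$. Your added observation about why the hypothesis $G_b \subseteq N$ is needed for consistency is a nice touch not spelled out in the paper's proof.
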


\begin{proof}
  Let
  ${\mu_0 \colon \mathcal{B}(G) \rightarrow [0, \infty]}$
  be some Haar measure on $G$
  and let
  ${\lambda \colon B \rightarrow (0, \infty)}$
  be as in the previous lemma.
  We set ${\mu_b \coloneqq \lambda_b \mu_0}$.
  Now let ${A \subseteq G}$ be some Borel set.
  Then we have
  \begin{align*}
    \mu_{g.b} (A)
    &=
      \lambda(g.b) \mu_0(A)
    \\
    &=
      \Delta(g) \lambda(b) \mu_0(A)
    \\
    &=
      \Delta(g) \mu_b(A)
    \\
    &=
      \mu_b(Ag)
    \\
    &=
      \mu_b\big(g^{-1} Ag\big)
    \\
    &=
      \mu_b\big(c_g^{-1}(A)\big)
    \\
    &=
      c_{g*} \mu_b(A).
  \end{align*}
  Finally,
  let ${f \colon G \rightarrow \R}$
  be continuous and compactly supported.
  Then we have
  \begin{equation*}
    \int_G f(h) d\mu_b(h) =
    \int_G \lambda(b) f(h) d\mu_0(h) =
    \lambda(b) \int_G f(h) d\mu_0(h).
  \end{equation*}
  Thus,
  the function
  \begin{equation*}
    B \rightarrow \R, \,
    b \mapsto \int_G f(h) d \mu_b(h)
  \end{equation*}
  is continuous.
\end{proof}

\subsection{Families of Group Invariant Measures}
\label{sec:famInvMeas}

In addition to a family of Haar measures $\{\mu_b\}_{b \in B}$
as in the previous \cref{sec:famHaarMeas},
we now describe the construction
of compatible $G$-invariant measures
on the orbits of the action ${G \curvearrowright B}$
as well as Haar measures on the stabilizers.

\begin{assumption}
  \label{assumption:GInvMeas}
  We impose the \cref{assumption:HaarMeas} and moreover,
  we assume
  \begin{itemize}
  \item
    we have a non-vanishing
    compactly supported continuous function
    \[\psi_0 \colon G \rightarrow [0, \infty)\]
    invariant under conjugation by elements of $N$
  \item
    and for any ${b \in B}$ the stabilizer $G_b$ is unimodular
    and the map
    \begin{equation*}
      \_.b \colon
      G \rightarrow G.b \subseteq B, \,
      g \mapsto g.b
    \end{equation*}
    is a quotient map.
  \end{itemize}
\end{assumption}

\begin{lemma}
  \label{thm:psi}
  \sloppy
  Under the \cref{assumption:GInvMeas}
  there is a continuous function
  \begin{equation*}
    \psi \colon G \times B \rightarrow [0, \infty)
  \end{equation*}
  such that
  \begin{equation}
    \label{eq:psiConstraint}
    \psi(h, b) = \psi\big(ghg^{-1}, g.b\big)
  \end{equation}
  for all ${g, h \in G}$ and ${b \in B}$
  and each partially applied function
  ${\psi(-, b) \colon G \rightarrow [0, \infty)}$
  (where ${b \in B}$)
  is a convex combination of functions of the form
  \begin{equation*}
    G \rightarrow [0, \infty), \,
    h \mapsto \psi_0\big(ghg^{-1}\big)
  \end{equation*}
  with ${g \in G}$.
\end{lemma}

\begin{proof}
  \sloppy
  It suffices to construct a continuous function
  \begin{equation*}
    \bar{\psi} \colon G \times (N \backslash B) \rightarrow [0, \infty)
  \end{equation*}
  such that
  \begin{equation*}
    \bar{\psi}(h, N.b) = \bar{\psi}\big(ghg^{-1}, Ng.b\big)
  \end{equation*}
  for all ${g, h \in G}$ and ${b \in B}$
  and each partially applied function
  ${\bar{\psi}(-, N.b) \colon G \rightarrow [0, \infty)}$
  (where ${b \in B}$)
  is a convex combination of functions of the form
  \begin{equation*}
    G \rightarrow [0, \infty), \,
    h \mapsto \psi_0\big(ghg^{-1}\big)
  \end{equation*}
  with ${g \in G}$.
  To this end,
  let
  \begin{equation*}
    \{\varphi_i \colon N \backslash B \rightarrow [0, 1]\}_{i \in I}
  \end{equation*}
  be a locally finite
  ${G / N}$-invariant partition of unity
  such that
  ${\varphi_i^{-1}(0, 1]}$
  is a free ${G / N}$-space for all ${i \in I}$.
  Moreover,
  we choose ${G / N}$-equivariant homeomorphisms
  \begin{equation}
    \label{eq:presentationPartUnity_psi}
    \varphi_i^{-1}(0, 1] \cong U_i \times G / N
  \end{equation}
  for ${i \in I}$.
  By combining the homeomorphisms
  \eqref{eq:presentationPartUnity_psi}
  with the functions
  \begin{equation*}
    G \times U_i \times G /N \rightarrow [0, \infty), \,
    (h, p, gN) \mapsto \psi_0\big(ghg^{-1}\big)
  \end{equation*}
  we obtain functions
  \begin{equation*}
    \bar{\psi}_i \colon
    G \times \varphi_i^{-1}(0, 1] \rightarrow [0, \infty)
  \end{equation*}
  such that
  \begin{equation*}
    \bar{\psi}_i(h, N.b) =
    \psi_0\big(ghg^{-1}\big)
    \quad
    \text{for some ${g \in G}$}
  \end{equation*}
  and for all ${i \in I}$
  and ${b \in \bigcup_{i \in I} \varphi_i^{-1}(0, 1]}$
  and such that
  \begin{equation}
    \label{eq:psiQuotConjugation}
    \bar{\psi}_i (h, N.b) = \bar{\psi}_i \big(ghg^{-1}, Ng.b\big)
  \end{equation}
  for all ${i \in I}$, ${g, h \in G}$,
  and ${b \in \bigcup_{i \in I} \varphi_i^{-1}(0, 1]}$.
  Then we define
  \begin{equation*}
    \bar{\psi} \colon
    G \times N \backslash B \rightarrow [0, \infty), \,
    (h, N.b) \mapsto \sum_{i \in I} \varphi_i(N.b) \bar{\psi}_i(h, N.b).
  \end{equation*}
  As ${\varphi_i(p) = 0}$ for ${i \in I}$ and
  ${p \in (N \backslash B) \setminus \varphi_i^{-1}(0, 1]}$
  and as the partition of unity $\{\varphi_i\}_{i \in I}$
  is locally finite,
  the function $\bar{\psi}$ is well-defined, continuous,
  and each partially applied function ${\bar{\psi}(-, N.b)}$
  (where ${b \in B}$)
  is a convex combination of functions of the form
  \begin{equation*}
    G \rightarrow [0, \infty), \,
    h \mapsto \psi_0\big(ghg^{-1}\big)
  \end{equation*}
  with ${g \in G}$.
  Now let ${g, h \in G}$ and ${b \in B}$.
  Then we have
  \begin{align*}
    \bar{\psi}(h, N.b)
    &=
      \sum_{i \in I} \varphi_i(N.b) \bar{\psi}_i(h, N.b)
    \\
    &=
      \sum_{i \in I} \varphi_i(gN.b) \bar{\psi}_i\big(ghg^{-1}, Ng.b\big)
    \\
    &=
      \sum_{i \in I} \varphi_i(Ng.b) \bar{\psi}_i\big(ghg^{-1}, Ng.b\big)
    \\
    &=
      \bar{\psi}\big(ghg^{-1}, Ng.b\big)
  \end{align*}
  Here the second equality follows from
  \eqref{eq:psiQuotConjugation}
  and ${G/N}$-invariance of the partition of unity
  $\{\varphi_i\}_{i \in I}$.
\end{proof}

\begin{theorem}
  \label{thm:compatFamMeas}
  Under the \cref{assumption:GInvMeas}
  there is a family
  \begin{equation*}
    \{
    \bar{\mu}_b \colon
    \mathcal{B}(G.b) \rightarrow [0, \infty]
    \}_{b \in B}
  \end{equation*}
  of $G$-invariant Radon measures,
  there are families
  \begin{equation*}
    \{
    \mu_b \colon
    \mathcal{B}(G) \rightarrow [0, \infty]
    \}_{b \in B}
    \quad \text{and} \quad
    \{
    \nu_b \colon
    \mathcal{B}(G_b) \rightarrow [0, \infty]
    \}_{b \in B}
  \end{equation*}
  of Haar measures,
  and there is a function
  ${\psi \colon G \times B \rightarrow [0, \infty)}$
  as in the previous \cref{thm:psi}
  such that
  \begin{equation*}
    \int_G
    \psi(h, b) d\mu_b(h)
    =
    1
    =
    \int_{G_b}
    \psi(h, b) d\nu_b(h)
  \end{equation*}
  for all ${b \in B}$,
  such that
  \begin{equation*}
    \int_G
    f(h) d\mu_b(h)
    =
    \int_{G.b}
    \int_{G_b} f(kh) d\nu_b(h)
    d\bar{\mu}_b(k.b)
    \tag{\ref{eq:fubini} revisited}
  \end{equation*}
  for all ${b \in B}$
  and compactly supported continuous
  ${f \colon G \rightarrow \R}$,
  and such that
  \begin{align}
    \label{eq:baseMeasPushforward}
    \bar{\mu}_{g.b}
    &=
      (g.\_)_* \bar{\mu}_{b},
    \\
    \notag
    \mu_{g.b}
    &=
      c_{g*} \mu_{b},
    \\
    \notag
    \text{and}
    \quad
    \nu_{g.b}
    &=
      c_{g*} \nu_{b}    
  \end{align}
  for all ${g \in G}$ and ${b \in B}$,
  where $c_g$ denotes the corresponding of the two
  vertical conjugation maps in
  \begin{equation*}
    \begin{tikzcd}[ampersand replacement=\&,row sep=5ex, column sep=7ex]
      G_b
      \arrow[r, hook]
      \arrow[d, "c_g"']
      \&
      G
      \arrow[d, "c_g"]
      \&[-7ex]
      h
      \arrow[d, mapsto]
      \\
      G_{g.b}
      \arrow[r, hook]
      \&
      G
      \&
      ghg^{-1}.
    \end{tikzcd}
  \end{equation*}
  Moreover,
  the family ${\{\mu_b\}_{b \in B}}$ is continuous
  and the partially applied function
  ${\psi(-, b) \colon G \rightarrow \R}$
  is compactly supported for each ${b \in B}$.
\end{theorem}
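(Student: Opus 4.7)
My plan is to build the four pieces in the order $\psi$, then $\nu_b$, then $\mu_b$, then $\bar{\mu}_b$, with each construction using the previous one to pin down normalizations. I would start by fixing a function $\psi\colon G \times B \to [0, \infty)$ as provided by \cref{thm:psi}; its conjugation-invariance is the glue that makes all normalizations $G$-equivariant. Next, for each $b \in B$ I would choose $\nu_b$ to be the unique Haar measure on the unimodular group $G_b$ such that $\int_{G_b} \psi(h, b)\, d\nu_b(h) = 1$. This integral is positive and finite because $\psi(-, b)$ is a convex combination (with at least one nonzero coefficient) of conjugates of the nonvanishing compactly supported function $\psi_0$, and $G_b$ has a Haar measure. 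The compatibility $\nu_{g.b} = c_{g*}\nu_b$ then follows from uniqueness: $c_{g*}\nu_b$ is a Haar measure on $G_{g.b}$ (conjugation is a topological group isomorphism), and using conjugation-invariance of $\psi$,
\begin{equation*}
  \int_{G_{g.b}} \psi(h, g.b)\, d(c_{g*}\nu_b)(h)
  = \int_{G_b} \psi(ghg^{-1}, g.b)\, d\nu_b(h)
  = \int_{G_b} \psi(h, b)\, d\nu_b(h) = 1,
\end{equation*}
matching the normalization defining $\nu_{g.b}$.

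For $\{\mu_b\}_{b \in B}$ I would start from the continuous family of Haar measures $\{\mu_b'\}_{b \in B}$ provided by the theorem of \cref{sec:famHaarMeas} (available because \cref{assumption:GInvMeas} includes \cref{assumption:HaarMeas}), and rescale: $\mu_b := \mu_b' / \int_G \psi(h, b)\, d\mu_b'(h)$. The denominator is positive, finite, and continuous in $b$ (since $\{\mu_b'\}$ is continuous and $\psi(-, b)$ is compactly supported continuous — a local argument using the locally finite partition of unity underlying $\psi$ shows the supports stay in a compact set locally in $b$). The same computation as above, now applied to $\{\mu_b'\}$ whose pushforward property we inherit, shows the normalizing factor is $G$-invariant in $b$, so $\mu_{g.b} = c_{g*}\mu_b$. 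The normalization $\int_G \psi(h, b)\, d\mu_b(h) = 1$ holds by construction, and continuity is preserved.

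For the orbit measures I would appeal to Weil's quotient-integration formula applied to the quotient map $G \to G/G_b \cong G.b$ (which is a quotient map by \cref{assumption:GInvMeas}). The prerequisite compatibility of modular functions, $\Delta|_{G_b} = \Delta_{G_b}$, holds trivially: $\Delta|_{G_b} \equiv 1$ because $G_b \subseteq N = \ker \Delta$, and $\Delta_{G_b} \equiv 1$ because $G_b$ is assumed unimodular. Thus there exists a unique $G$-invariant Radon measure $\bar{\mu}_b$ on $G.b$ such that \eqref{eq:fubini} holds for all $f \in C_c(G)$ with the already-fixed $\mu_b$ and $\nu_b$.

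\textbf{Main obstacle.} The step I expect to be most delicate is verifying the pushforward property \eqref{eq:baseMeasPushforward} for $\bar{\mu}_b$. My plan is a uniqueness argument: both $\bar{\mu}_{g.b}$ and $(g.\_)_*\bar{\mu}_b$ are $G$-invariant Radon measures on $G.b$ (the pushforward is $G$-invariant because $g.\_$ commutes with the $G$-action up to the appropriate conjugation), and by uniqueness of the quotient measure they must coincide once we check they satisfy the same Weil formula. To see this, substitute $f(h) \mapsto f(g^{-1} h g)$ (i.e.\ $f \circ c_{g^{-1}}$) into \eqref{eq:fubini} for $\bar{\mu}_b$ and use $\mu_{g.b} = c_{g*}\mu_b$, $\nu_{g.b} = c_{g*}\nu_b$ to rewrite both iterated integrals in terms of $g.b$ and its stabilizer; a change of variables $c = k.b \mapsto g.c$ in the outer integral converts $\bar{\mu}_b$ into $(g.\_)_*\bar{\mu}_b$ and shows it satisfies the defining Weil formula for $\bar{\mu}_{g.b}$, so the two agree. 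The remaining normalization $\int_G \psi(h, b)\, d\mu_b(h) = 1$ is already built into $\mu_b$, and the two identities for $\psi$ coexist because the restriction of $\mu_b$ to $G_b$ (via Fubini with the trivial outer factor) relates to $\nu_b$ precisely through $\bar{\mu}_b$ and the normalization of $\psi$.
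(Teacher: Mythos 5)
Your proposal is correct and follows essentially the same route as the paper: fix $\psi$ from \cref{thm:psi}, normalize the Haar measures $\mu_b$ and $\nu_b$ against $\psi(-,b)$, obtain $\bar{\mu}_b$ from Weil's quotient integration formula, and derive all three pushforward identities by uniqueness combined with the conjugation-invariance of $\psi$. The only cosmetic differences are that you build $\mu_b$ by rescaling the continuous family from Appendix \ref{sec:famHaarMeas} rather than normalizing a single reference Haar measure $\mu_0$ (the paper's route to continuity), and that you explicitly verify the modular-function compatibility $\Delta|_{G_b} = 1 = \Delta_{G_b}$, which the paper leaves implicit in its citation.
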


\begin{remark}
  We note that as $\bar{\mu}_b$ is stated to be $G$-invariant
  for any ${b \in B}$ in \cref{thm:compatFamMeas},
  we could have stated equation \eqref{eq:baseMeasPushforward} as
  ${\bar{\mu}_{g.b}
    =
    \bar{\mu}_{b}}$
  instead.
  However,
  \eqref{eq:baseMeasPushforward}
  is the equation that we will need
  and it is also more elementary to prove,
  i.e. without directly invoking $G$-invariance.
\end{remark}

\begin{proof}
  Let
  ${\psi \colon G \times B \rightarrow [0, \infty)}$
  be as in the previous \cref{thm:psi}
  and suppose we have some ${b \in B}$.
  Let
  \begin{equation*}
    \mu_b \colon \mathcal{B}(G) \rightarrow [0, \infty]
    \quad \text{and} \quad
    \nu_b \colon \mathcal{B}(G_b) \rightarrow [0, \infty]
  \end{equation*}
  be the unique Haar measures such that
  \begin{equation*}
    \int_G \psi(h, b) d\mu_b(h)
    = 1 =
    \int_{G_b} \psi(h, b) d\nu_b(h).
  \end{equation*}
  It is well known there is a unique $G$-invariant Radon measure
  \begin{equation*}
    \bar{\mu}_b \colon
    \mathcal{B}(G.b) \rightarrow
    [0, \infty)
  \end{equation*}
  such that
  \begin{equation*}
    \int_G
    f(h) d\mu_b(h)
    =
    \int_{G.b}
    \int_{G_b} f(kh) d\nu_b(h)
    d\bar{\mu}_b(k.b)
    \tag{\ref{eq:fubini} revisited}
  \end{equation*}
  for any compactly supported continuous function
  ${f \colon G \rightarrow \R}$,
  see for example \mbox{\citep[Theorem 4.2]{2020arXiv200610956T}}.
  Now let ${g \in G}$ and ${b \in B}$.
  In order to show
  ${\nu_{g.b} = c_{g*} \nu_b}$
  it suffices to test ${c_{g*} \nu_b}$
  on the partially applied function
  ${\psi(-, g.b)|_{G_b} \colon
    G_{g.b} \rightarrow
    [0, \infty)}$
  as here
  \begin{equation*}
    \int_{G_{g.b}} \psi(h, g.b) d c_{g*}\nu_b(h) =
    \int_{G_b} \psi\big(ghg^{-1}, g.b\big) d \nu_b(h)
    =
    \int_{G_b} \psi(h, b) d \nu_b(h)
    =
    1.
  \end{equation*}
  Completely analogously
  we obtain
  ${\mu_{g.b} = c_{g*} \mu_b}$.
  Now let
  ${f \colon G \rightarrow \R}$
  be continuous and compactly supported.
  Then we have
  \begin{align*}
    \int_{G.b}
    \int_{G_{g.b}}
    f(kh)
    d \nu_{g.b}(h)
    d (g.\_)_* \bar{\mu}_b (kg.b)
    \!\!\!\!\!\!\!\!\!\!\!\!\!\!\!\!\!\!\!\!\!\!\!\!\!\!\!\!\!\!
    \!\!\!\!\!\!\!\!\!\!\!\!\!\!\!\!\!\!\!\!\!\!\!\!\!\!\!\!\!\!
    \\
    &=
      \int_{G.b}
      \int_{G_{g.b}}
      f(kh)
      d c_{g*} \nu_{b}(h)
      d (g.\_)_* \bar{\mu}_b (kg.b)
    \\
    &=
      \int_{G.b}
      \int_{G_{b}}
      f\big(kghg^{-1}\big)
      d \nu_{b}(h)
      d (g.\_)_* \bar{\mu}_b (kg.b)
    \\
    &=
      \int_{G.b}
      \int_{G_{b}}
      f\big(khg^{-1}\big)
      d \nu_{b}(h)
      d (g.\_)_* \bar{\mu}_b (k.b)
    \\
    &=
      \int_{G.b}
      \int_{G_{b}}
      f\big(gkhg^{-1}\big)
      d \nu_{b}(h)
      d \bar{\mu}_b (k.b)
    \\
    &\stackrel{\eqref{eq:fubini}}{=}
      \int_{G}
      f\big(ghg^{-1}\big)
      d \mu_b (h)
    \\
    &=
      \int_{G}
      f(h)
      d c_{g*} \mu_b (h)
    \\
    &=
      \int_{G}
      f(h)
      \mu_{g.b} (h).
  \end{align*}
  As we defined
  ${\bar{\mu}_{g.b}}$
  as the unique $G$-invariant Radon measure
  satisfying \eqref{eq:fubini}
  with $g.b$ substituted for $b$,
  we get ${(g.\_)_* \bar{\mu}_b = \bar{\mu}_{g.b}}$.
  Moreover,
  the partially applied function
  ${\psi(-, b) \colon G \rightarrow [0, \infty)}$
  is compactly supported
  as it is a convex combination of functions of the form
  \begin{equation*}
    G \rightarrow [0, \infty), \,
    h \mapsto \psi_0\big(ghg^{-1}\big)
  \end{equation*}
  with ${g \in G}$,
  and its integral
  ${\int_G
    \psi(h, b)
    d\mu_0(h)}$
  is non-zero for the same reason.
  
  Finally,
  let ${f \colon G \rightarrow \R}$
  be continuous and compactly supported as before
  and let
  ${\mu_0 \colon \mathcal{B}(G) \rightarrow [0, \infty]}$
  be some Haar measure on $G$.
  Then we have
  \begin{align*}
    \int_G
    \psi(h, b)
    d\mu_0(h)
    \int_G
    f(h)
    d\mu_b(h)
    &=
      \int_G
      \psi(h, b)
      d\mu_b(h)
      \int_G
      f(h)
      d\mu_0(h)
    \\
    &=
      \int_G
      f(h)
      d\mu_0(h)
  \end{align*}
  for all ${b \in B}$.
  As the integral
  ${\int_G
    \psi(h, b)
    d\mu_0(h)}$
  is non-zero and continuous in ${b \in B}$,
  the integral
  ${\int_G
    f(h)
    d\mu_b(h)}$
  is continuous in ${b \in B}$ as well.
\end{proof}

\section{Vanishing Dual Sections}
\label{sec:vanishingDualSections}

Let $B$ be a paracompact space,
let ${E \rightarrow B}$ be a real vector bundle over $B$,
and let
${\mu \colon \mathcal{B}(B) \rightarrow [0, \infty]}$
be some strictly positive locally finite Borel measure on $B$.
Moreover,
let ${\sigma \in \Gamma_c(E^*)}$ be a compactly supported continuous section
of the dual bundle ${E^* \rightarrow B}$ associated to $E$.
(We use $c$ as a subscript to $\Gamma$
to denote compactly supported sections.)

\begin{lemma}
  \label{thm:vanishingDualSections}
  If we have
  \begin{equation*}
    \int_B \sigma(f(b)) d\mu(b) = 0
  \end{equation*}
  for all compactly supported continuous sections
  ${f \in \Gamma_c(E)}$,
  then ${\sigma = 0}$.
\end{lemma}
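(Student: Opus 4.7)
The plan is to argue by contradiction. Suppose $\sigma \neq 0$, so there is some $b_0 \in B$ and some $v_0 \in E_{b_0}$ with $\sigma(b_0)(v_0) \neq 0$. Replacing $v_0$ by $-v_0$ if necessary, we may assume $\sigma(b_0)(v_0) > 0$. The strategy is to construct a compactly supported continuous section $f \in \Gamma_c(E)$ concentrated near $b_0$ for which the integrand $\sigma(b)(f(b))$ is nonnegative and strictly positive on a neighborhood of $b_0$, so that strict positivity of $\mu$ on nonempty open sets forces the integral to be strictly positive, contradicting the hypothesis.

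First I would pass to a local trivialization. Choose an open neighborhood $U$ of $b_0$ together with an isomorphism $E|_U \cong U \times \R^n$, and let $w_0 \in \R^n$ correspond to $v_0$ under this trivialization. Then $v \colon U \to E|_U$ defined by $v(b) = (b, w_0)$ is a continuous local section extending $v_0$. The map
\begin{equation*}
  \phi \colon U \to \R, \,
  b \mapsto \sigma(b)(v(b))
\end{equation*}
is continuous, and $\phi(b_0) > 0$. By continuity of $\phi$ there is an open neighborhood $V$ of $b_0$ with $\overline{V} \subseteq U$ on which $\phi(b) \geq \phi(b_0)/2$.

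Next I would produce a cutoff. Since $B$ is paracompact Hausdorff, it is normal, so Urysohn's lemma (or a partition of unity subordinate to $\{V, B \setminus \{b_0\}\}$) yields a continuous function $\varphi \colon B \to [0,1]$ supported in $V$ with $\varphi(b_0) = 1$; by shrinking $V$ first to a relatively compact neighborhood (the implicit local compactness in the paper's measure-theoretic setting guarantees this), $\varphi$ has compact support. Define
\begin{equation*}
  f \colon B \to E, \,
  b \mapsto
  \begin{cases}
    \varphi(b) v(b) & b \in U \\
    0 & b \notin U,
  \end{cases}
\end{equation*}
which is a well-defined continuous compactly supported section of $E$, i.e. $f \in \Gamma_c(E)$.

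Finally I would evaluate the integral. We have
\begin{equation*}
  \int_B \sigma(b)(f(b)) d\mu(b)
  = \int_V \varphi(b) \phi(b) d\mu(b)
  \geq \frac{\phi(b_0)}{2} \int_V \varphi(b) d\mu(b) > 0,
\end{equation*}
where the last inequality uses that $\varphi$ is nonnegative and strictly positive on a nonempty open subset of $V$, combined with strict positivity and local finiteness of $\mu$. This contradicts the vanishing hypothesis, so $\sigma = 0$. The only real subtlety is the construction of a compactly supported bump function $\varphi$; paracompact Hausdorff gives normality and partitions of unity subordinate to any open cover, which together with the mild local compactness implicit in working with locally finite Borel measures yields compact support.
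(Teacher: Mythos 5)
Your argument is correct in outline but takes a genuinely different route from the paper. The paper's proof is global and constructive: paracompactness of $B$ yields an inner product on $E$ (citing Hatcher), the musical isomorphism turns $\sigma$ into a test section ${f = \sigma^{\sharp} \in \Gamma_c(E)}$ with ${\sigma(f(b)) = \langle f(b), f(b)\rangle \geq 0}$, and strict positivity of $\mu$ then forces ${f = 0}$, hence ${\sigma = f^{\flat} = 0}$. Your proof is local and by contradiction, testing $\sigma$ against a bump section concentrated at a point where it does not vanish. Both work; the paper's version produces the test section in one stroke and needs neither a local trivialization nor a cutoff, while yours is more elementary (no auxiliary inner product) and makes explicit exactly where strict positivity of $\mu$ enters.

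The one step you should repair is the compact support of the cutoff $\varphi$. There is no \enquote{implicit local compactness} in the hypotheses: $B$ is only assumed paracompact, and strictly positive locally finite Borel measures exist on plenty of non-locally-compact spaces, so in general you cannot shrink $V$ to a relatively compact neighborhood. The fix is already available in the hypotheses, though: $\sigma \in \Gamma_c(E^*)$ is compactly supported, and your point $b_0$ lies in the open set ${\{b \in B \mid \sigma(b) \neq 0\}}$, whose closure $\supp \sigma$ is compact. Choosing ${V \subseteq U \cap \{b \in B \mid \sigma(b) \neq 0\}}$ therefore makes $\overline{V}$ compact, and the Urysohn function supported in $V$ then has compact support as required. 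With that substitution your proof is complete.
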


\begin{proof}
  By \mbox{\citep[Proposition 1.2]{hatcherKTheory}}
  there is an inner product
  \begin{equation*}
    \langle \_, \_ \rangle \colon
    E \oplus E \rightarrow \R
  \end{equation*}
  on $E$.
  Let ${f \coloneqq \sigma^{\sharp} \in \Gamma_c(E)}$
  be the section corresponding to $\sigma$ under the musical isomorphism
  induced by the inner product ${\langle \_, \_ \rangle}$.
  Then we have
  \begin{equation*}
    \sigma(f(b)) = \langle f(b), f(b) \rangle \geq 0
  \end{equation*}
  for all ${b \in B}$ and moreover,
  \begin{equation*}
    \int_B \langle f(b), f(b) \rangle d\mu(b) =
    \int_B \sigma(f(b)) d\mu(b) = 0,
  \end{equation*}
  hence ${f(b) = 0}$ for all ${b \in B}$.
  Applying the musical isomorphism once more we obtain
  ${\sigma = f^{\flat} = 0}$.
\end{proof}

\section{Locally Finite Covers and Compact Subsets}

Let $X$ be a topological space,
let ${\{U_i \subseteq X\}_{i \in I}}$
be a locally finite cover of $X$,
and let ${C \subseteq X}$
be a compact subset.
(In our application of the following \cref{thm:locallyFiniteCoverCompact},
each subset ${U_i \subseteq X}$ is open in $X$.
In order to prove \cref{thm:locallyFiniteCoverCompact},
this assumption is not needed however.)

\begin{lemma}
  \label{thm:locallyFiniteCoverCompact}
  The set
  ${\{i \in I \mid U_i \cap X \neq \emptyset\}}$
  is finite.
\end{lemma}

\begin{proof}
  For each ${x \in X}$
  we choose an 
  open neighbourhood ${V_x \subseteq X}$ of $x$
  such that ${U_i \cap V_x \neq \emptyset}$
  for only a finite number of ${i \in I}$.
  Then we have the open cover
  ${\{V_x \cap C\}_{x \in X}}$
  of ${C}$.
  Moreover,
  as ${C}$
  is compact,
  there is a finite subset ${J \subseteq X}$
  with
  \[C \subseteq \bigcup_{j \in J} V_j.\]

  Now let ${i \in I}$.
  Then there is some
  \begin{equation*}
    y \in U_i \cap C \subseteq \bigcup_{j \in J} V_j.
  \end{equation*}
  Thus,
  there is some ${j_i \in J}$
  with ${y \in V_{j_i}}$ and hence
  \begin{equation}
    \label{eq:coverIntersection}
    U_i \cap V_{j_i} \neq \emptyset.
  \end{equation}

  Now let
  ${P \coloneqq
    \{(i, j) \in I \times J \mid U_i \cap V_j \neq \emptyset\}.}$
  Then we have the map
  \begin{equation}
    \label{eq:coverInjection}
    I \rightarrow P,\,
    i \mapsto (i, j_i),
  \end{equation}
  which is well-defined by \eqref{eq:coverIntersection}
  and injective by construction.
  Moreover,
  the set $P$
  is finite by construction
  of the family $\{V_x\}_{x \in X}$
  and the finiteness of $J$.
  In conjunction with the injectivity of the map
  \eqref{eq:coverInjection},
  the set $I$ is finite as well.  
\end{proof}


\end{document}